\documentclass[11pt, letterpaper]{article}

\usepackage[
letterpaper,
top=1in,
bottom=1in,
left=1in,
right=1in]{geometry}

\usepackage[utf8]{inputenc}

\usepackage{amsthm, amsmath, amssymb}
\usepackage{mathtools}
\usepackage[dvipsnames]{xcolor}
\usepackage{dsfont}

\usepackage{enumitem}

\usepackage[colorlinks=true,urlcolor=blue,linkcolor=blue,citecolor=blue]{hyperref}
\usepackage[nameinlink]{cleveref}
\crefname{equation}{}{}

\usepackage{graphicx}

\usepackage{thmtools}
\usepackage{thm-restate}

\newtheorem{theorem}{Theorem}[section]
\newtheorem{corollary}[theorem]{Corollary}
\newtheorem{lemma}[theorem]{Lemma}

\theoremstyle{definition}

\newcommand{\E}{\mathbb{E}}

\newcommand{\R}{\mathbb{R}}
\newcommand{\Z}{\mathbb{Z}}

\newcommand{\G}{\mathcal{G}}

\newcommand{\SOSrk}{\mathrm{SOS}_\mathrm{rk}}
\newcommand{\MK}{\mathrm{MK}}

\newcommand{\sgn}{\mathrm{sgn}}
\newcommand{\ceil}[1]{\lceil #1 \rceil}
\newcommand{\floor}[1]{\lfloor #1 \rfloor}

\newcommand{\x}{\mathbf{x}}

\newcommand{\opt}{\mathrm{opt}}

\begin{document}

\title{On the Distribution of Unweighted Minimum Knapsack Instances with Large SOS Rank}

\author{
  \begin{minipage}[t]{0.3\textwidth}
    \centering
    Adam Kurpisz \\ \footnotesize \href{mailto:kurpisza@ethz.ch}{kurpisza@ethz.ch} \\ \small Bern Business School \& ETH Zurich
  \end{minipage}
  \hfill
  \begin{minipage}[t]{0.3\textwidth}
    \centering
    Lucas Slot\thanks{
    Supported by the project ``Foundations of sum-of-squares algorithms: worst-case and beyond'' with file number VI.Veni.242.234 of the research programme Veni ENW which is (partly) financed by the Dutch Research Council (NWO) under the grant \url{https://doi.org/10.61686/GIEOW65108}.        
    }  
     \\ \footnotesize 
     \href{mailto:l.f.h.slot@uva.nl}{l.f.h.slot@uva.nl} \\ \small University of Amsterdam
  \end{minipage}
  \hfill
  \begin{minipage}[t]{0.3\textwidth}
    \centering
    Mikhail Zaytsev \\ \footnotesize \href{mailto:mzaytsev@ethz.ch}{mzaytsev@ethz.ch} \\ \small ETH Zurich
  \end{minipage}
}

\maketitle

\begin{abstract}
We analyze the sum-of-squares rank of unweighted instances of the Minimum Knapsack (MK) problem, i.e., minimization of $\sum_{i=1}^n x_i$ for 0/1 variables under the constraint $\sum_{i=1}^n x_i \geq q$, with $q \in \R$. Such instances have long served as a testbed for understanding the limitations of lift-and-project methods in Boolean optimization. For example, both the Lovász--Schrijver and Sherali--Adams hierarchies require (maximal) rank~$n$ to solve them, already when $q=1/2$ is constant. The SOS hierarchy requires only \emph{sublinear} rank $O(\sqrt{n})$ to solve unweighted MK when $q=1/2$. On the other hand, when $q$ is allowed to vary with~$n$, the SOS rank of the problem may become linear. Interestingly, this is known to happen both when $q$ is large, and when $q$ is very small ($0<q \leq 2^{-n}$). This raises the question of whether we should think of hard instances of unweighted MK as being typical for the SOS hierarchy, or as a consequence of very specific choices of the threshold parameter $q$.

In this paper, we address this question by showing new upper and lower bounds on the SOS rank of unweighted MK in the whole regime of the parameter $q$. For $n-q \leq O(1)$, we show that the SOS rank is constant. In contrast, when $q \leq O(1)$, a linear rank is needed if $q$ is exponentially close to an integer. As our main positive result, we show that linear rank is very rare for $q \leq O(1)$. This can be expressed in the language of smoothed analysis: after perturbing $q$ by a Gaussian with mean $0$ and variance $\sigma^2$, the expected SOS rank of $\MK$ is $O(\sqrt{n} \log (n/\sigma))$.
\end{abstract}

\section{Introduction}
The sum-of-squares hierarchy is a systematic procedure to obtain tractable relaxations of (hard) optimization problems involving a polynomial objective function and polynomial constraints. It was introduced in the early 2000s by Lasserre~\cite{Lasserre01} and Parrilo~\cite{Parrilo2000}, based on earlier work on effective Positivstellens\"atze in real algebraic geometry~\cite{GrigorievV01, Putinar93, Schmuedgen:Positivstellensatz}. It has found extensive applications in mathematical optimization and computer science, see, e.g.,~\cite{Laurent2008, Lasserre2009, BarakSteurer2014}.

Here, we consider the important special case of linear optimization over the Boolean hypercube, i.e., problems of the form
\begin{equation}
\begin{split} \label{EQ:boolPOP}
      \opt := \min \quad& c^\top \x \\
      \mathrm{s.t.} \quad & A \x \geq b, \\
      &\x \in \{ 0, 1 \}^n,
\end{split} 
\end{equation}
Note that the Boolean constraints on $\x$ appearing in~\eqref{EQ:boolPOP}  can be enforced by degree-$2$ polynomial equalities $x_i = x_i^2$, for  $i \in [n]$. Formulation~\eqref{EQ:boolPOP} captures classical hard optimization problems, including independent set and max-cut, where the (first level) of the SOS hierarchy recovers the famous Lovász theta-number~\cite{Lovasz1979} and Goemans-Williamson relaxation~\cite{Goemans1995}, respectively.

Applied to this setting, the SOS hierarchy can be viewed as a \emph{lift-and-project method}, parameterized by the degree $d$, that yields tighter relaxations as the degree increases. The degree at which it reaches the convex hull of integer solutions is called rank, which for problem~\eqref{EQ:boolPOP} is at most $n$. 

\subsection{Unweighted Minimum Knapsack}
The Minimum Knapsack (MK) problem is given by
\[
\mathrm{MK}(c, w, q): \quad \min c^\top \x \quad \text{s.t.} \quad w^\top \x \ge q, \quad\x \in \{0,1\}^n,
\]
where each instance depends on a cost vector $c \in \mathbb{R}^n$, a weight vector $w \in \mathbb{R}^n$, and a threshold value $q \in \mathbb{R}$.
In this paper, we focus on symmetric instances of $\MK$ in which all weights and costs are equal to $1$, that is, instances of the form $\MK(\mathbf1, \mathbf1, q)$, or $\MK(q)$ for short.
Note that these are easy to solve by inspection: for $q < 0$, the optimum is $0$; for $q > n$, the problem is infeasible; and for $0 \leq q \leq n$, the optimum is $\lceil q \rceil$. 
Nonetheless, these `unweighted' knapsack instances have long served as a testbed for understanding the limitations of lift-and-project methods, including the sum-of-squares hierarchy. 
In early work, Cook and Dash~\cite{cookdash} showed that the Lovász--Schrijver hierarchy needs (maximal) rank $n$ to capture the polytope corresponding to the instance $\MK(1/2)$. Similarly, for the Sherali--Adams hierarchy, Laurent~\cite{Laurent03} proved that the rank is $n$. In the same work, Laurent observed that this polytope has SOS rank $2$ when $n=2$, but left the problem of determining the rank for general $n$ open.

More recently, Kurpisz, Potechin, and Wirth~\cite{kurpisz2021soscertificationsymmetricquadratic} show that $\MK(1/2)$ has \emph{sublinear} SOS rank $O(\sqrt{n})$ by proving an upper bound of $O(\sqrt{n} \log(1/q))$ for all $0 < q \leq 1/2$. This bound is tight for constant $q$ in light of a lower bound of $\Omega(\sqrt{n \log 1/q})$ established in~\cite[Lemma 14]{kurpisz:LIPIcs.ICALP.2019.79}.
When $q=q(n)$ is allowed to vary with $n$, the SOS rank of $\MK(q)$ may become linear: when $q\le 2^{-n}$, the lower bound of~\cite{kurpisz:LIPIcs.ICALP.2019.79} is $\Omega(n)$. Moreover, for a related variant of MK with an equality constraint of Grigoriev~\cite{Grigoryev}, the SOS rank is $\Omega(n)$ when both $q=\Theta(n)$ and $n-q=\Theta(n)$. Finally, $\MK(q)$ has SOS rank $0$ when $q$ is integral or $q\notin[0,n]$.

These results indicate that the SOS rank of unweighted MK depends sharply on the threshold parameter $q$, but only partial results are known in isolated regimes. This work aims to provide a more global picture, addressing in particular the following questions:
\begin{itemize}
    \item Do instances of $\MK(q)$ typically have sublinear SOS rank?
    \item Can we characterize when linear rank is necessary?
\end{itemize}

\subsection{Main Contributions}
We study the SOS rank of the unweighted minimum knapsack problem as a function of the threshold parameter~$q$.
Throughout the paper, it is convenient to write
\[
q = \lfloor q \rfloor + \widehat{q},
\]
where $\widehat{q} \in (0,1)$ denotes the fractional part of $q$.
Our results show that the SOS rank depends on~$q$ in two different ways: a baseline effect determined by which Hamming layers of the hypercube the constraint $|\x|\ge q$ intersects, and an additional effect that arises when the fractional part $\widehat{q}$ is very small.

\paragraph{Sources of hardness.}
In Section \ref{lb_section}, we establish lower bounds that clarify the roles of $\widehat{q}$ and~$\lfloor q \rfloor$.
Building on a result by Grigoriev~\cite{Grigoryev}, we show a baseline lower bound of
\[
\Omega\bigl(\min\{\lceil q\rceil,\,n-\lfloor q\rfloor\}\bigr)
\]
for all non-integral thresholds~$q$.
Beyond this baseline, we extend the linear lower bound on the SOS rank of $\MK(q)$ of~\cite{kurpisz:LIPIcs.ICALP.2019.79} when $0 < q \leq 2^{-n}$ to all $0<q<\floor{n/2}$ with fractional part in $(0, 2^{-n}]$, i.e., all $q$ that are \emph{slightly larger} than the nearest integer. 
This identifies an additional integrality-sensitive source of hardness. To be precise, we show that (see~\Cref{lb_MK}):

\begin{theorem}
    Let $0<q<\floor {n/2}$ with $\widehat{q}\ne 0$. The SOS rank of $\MK(q)$ is lower bounded by
    \[
    \begin{cases}
    \Omega\left(\min\left\{n, \sqrt{n \log 1/\widehat q}\right\}\right) & \text{ if } \quad 0 < \widehat q \leq 1/2,
    \\
    \Omega\left(\sqrt{n (1-\widehat{q})}\right) & \text{ if } \quad 1/2 < \widehat q < 1.
    \end{cases}
    \]
\end{theorem}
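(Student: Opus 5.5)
The plan is to build a symmetric pseudoexpectation and reduce to the known case $\floor{q}=0$ from~\cite{kurpisz:LIPIcs.ICALP.2019.79}. Write $k=\floor{q}$ and $m=n-k$. Since $k<\floor{n/2}$, we have $m\geq n/2$, so $m=\Theta(n)$. Fix a set $S$ of $k$ coordinates and consider the restriction $x_i=1$ for $i\in S$. On the remaining $m$ variables $y$ the constraint $|\x|\geq q$ becomes $|y|\geq \widehat q$, which is exactly the instance $\MK(\widehat q)$ on $m$ variables. The lower bound of~\cite[Lemma~14]{kurpisz:LIPIcs.ICALP.2019.79} provides a degree-$d$ pseudoexpectation $\tilde\E_y$ with $\tilde\E_y[|y|]<1$ for $d=c\min\{m,\sqrt{m\log 1/\widehat q}\}$ in the regime $\widehat q\leq 1/2$.

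The next step is to lift it. Define $\tilde\E$ on $\R[\x]/(x_i^2-x_i)$ by substituting $x_i=1$ for $i\in S$ and then applying $\tilde\E_y$. Substitution is a ring homomorphism that maps squares to squares, respects degree, and maps $|\x|-q$ to $|y|-\widehat q$. Hence $\tilde\E$ satisfies the booleanity constraints, positivity on squares of degree $\le d$, and the constraint $\tilde\E[s\cdot(|\x|-q)]\ge 0$ for SOS multipliers $s$. Its objective value is $k+\tilde\E_y[|y|]<k+1=\ceil{q}=\opt$, so the rank is greater than $d$. Because $m\ge n/2$, we get $\Omega(\min\{n,\sqrt{n\log 1/\widehat q}\})$. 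Symmetrizing over $S$ is unnecessary, since a non-symmetric pseudoexpectation already certifies the gap. One subtlety needs checking: the certificate form used in the paper's definition of rank, e.g.\ whether products of the constraint with squares of degree up to $d$ are required, must match what Lemma~14 supplies. The substitution argument transfers any such form verbatim.

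For $1/2<\widehat q<1$ I would do the same reduction but need a lower bound for $\MK(\widehat q)$ with $\widehat q$ close to $1$. This is the main obstacle, since the cited lemma is stated for small $q$. My first attempt would be to reopen its construction. The standard symmetric pseudoexpectation is determined by a univariate ``pseudo-density'' $\mu$ on layers $|y|=j$, which must put mass below layer $1$, i.e.\ on $j=0$, while keeping $\tilde\E[|y|-\widehat q]$-localized moment matrices PSD up to degree $d$. The degree at which this fails scales like the degree needed for a univariate polynomial to separate $0$ from $\{1,\dots,m\}$ with gap ratio governed by $1-\widehat q$. Via Chebyshev/Markov-type inequalities, this gives $\Omega(\sqrt{m(1-\widehat q)})$.

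Concretely, I would aim to show that if a degree-$d$ certificate $|y|-1=\sigma_0+\sigma_1(|y|-\widehat q)$ existed, then, after symmetrization (allowed by Blekherman–Riener style reductions), it yields a univariate polynomial $p$ of degree $O(d)$. This $p$ satisfies $p(0)\ge (1-\widehat q)^{-1}$-type blowup relative to $\max_{j\le m}|p(j)|$, contradicting Markov's inequality on the discrete interval unless $d^2\gtrsim m(1-\widehat q)$. Equivalently, I would construct the pseudo-density explicitly from Chebyshev nodes. Combining with $m\ge n/2$ finishes the second case.
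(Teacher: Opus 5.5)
Your case $0<\widehat q\le 1/2$ is correct but takes a different route from the paper. Fixing $\floor{q}$ coordinates to $1$ maps any certificate $|\x|-\ceil{q}=s_0+s_1(|\x|-q)$ on $\{0,1\}^n$ to a certificate $|y|-1=s_0'+s_1'(|y|-\widehat q)$ on $\{0,1\}^m$, with $m=n-\floor{q}\ge n/2$ and no larger degree. If you phrase it this way, on the certificate side, you do not need \cite[Lemma~14]{kurpisz:LIPIcs.ICALP.2019.79} to be stated as a pseudoexpectation. You only need it to give $\Omega(\min\{m,\sqrt{m\log 1/\widehat q}\})$ for all $\widehat q\in(0,1/2]$, which is how the paper's introduction describes it. The paper never reduces to $\floor{q}=0$. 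It symmetrizes $s_1$ directly, reads off $\tilde S(\floor{q})\ge 1/\widehat q$ and $0\le \tilde S\le 1$ on $\ceil{q},\dots,n$, shifts by $\floor{q}$, and applies Erd\'elyi's Coppersmith--Rivlin-type theorem once to get both regimes. Your route gives a clean monotonicity statement: the rank of $\MK(q)$ on $n$ variables dominates that of $\MK(\widehat q)$ on $n-\floor{q}$ variables. The cost is that it uses the $\floor{q}=0$ bound as a black box, whereas the paper is self-contained up to a single approximation-theoretic inequality.

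Your case $1/2<\widehat q<1$ follows the paper's idea in outline: symmetrize the multiplier, pass to a univariate polynomial, and apply a discrete Markov/Coppersmith--Rivlin inequality. However, the quantitative claim you rest it on is false. A certificate does not force $p(0)\ge (1-\widehat q)^{-1}\max_{j\in[m]}|p(j)|$. Evaluating on layers gives only $\tilde S(0)\ge 1/\widehat q$ against $0\le \tilde S(j)\le \frac{j-1}{j-\widehat q}<1$, a ratio that tends to $1$ as $\widehat q\to 1$. A forced ratio of $(1-\widehat q)^{-1}$ cannot hold. By the same inequality that handles small $\widehat q$, it would give rank $\Omega(\min\{n,\sqrt{n\log\frac{1}{1-\widehat q}}\})$. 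That is linear for $1-\widehat q\le 2^{-n}$ and contradicts the $O(\sqrt n)$ bound of Theorem~\ref{MK-rank} when $\floor{q}=0$ and $\widehat q\ge 1/2$.

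What you should use instead is an additive drop over one unit step. The normalized polynomial $P=\tilde S/\tilde S(0)$ satisfies $P(0)=1$ and $P(j)\in[0,\widehat q]$ for $j\in[m]$. If $d\le c\sqrt m$, the argument runs as follows:
\begin{itemize}
\item Coppersmith--Rivlin gives $\|P\|_{[1,m]}=O(1)$.
\item Chebyshev growth extends this to $[0,m]$ at constant cost.
\item Markov's inequality then gives $|P'|=O(d^2/m)$ on $[0,m]$.
\end{itemize}
Hence $1-\widehat q\le P(0)-P(1)=O(d^2/m)$, which is the paper's $\Omega(\sqrt{m(1-\widehat q)})$. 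You can even do better. Since $\frac{j-1}{j-\widehat q}$ vanishes at $j=1$, the certificate forces $\tilde S(1)=0$. The drop is then $P(0)-P(1)=1$, and the same argument gives $\Omega(\sqrt m)=\Omega(\sqrt n)$ uniformly in $\widehat q$.
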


\paragraph{Upper bounds.}
In Section \ref{ub_section}, we complement these lower bounds with upper bounds on the SOS rank that are almost tight across the full range of the threshold parameter $q$. Previously, an upper bound of $O(\sqrt{n}\log 1/\widehat{q})$ was only known for $q \leq 1/2$; see~\cite{kurpisz2021soscertificationsymmetricquadratic}.
Notice that, while the lower bounds show that the SOS rank must necessarily depend on the fractional part $\widehat{q}$ when $0<q<\lfloor n/2\rfloor$, this dependence completely disappears once $q>\lfloor n/2\rfloor$.
We establish this in \Cref{upper_layers_ub} and \Cref{MK-rank} below, which can be summarized as:
\begin{theorem}
Let $0<q<n$ with $\widehat{q}\ne 0$.
The SOS rank of $\MK(q)$ is upper bounded by
\[
\begin{cases}
O\!\left(
\sqrt{n}\log\!\left(2/\widehat{q}\right)
+ \sqrt{n\lfloor q\rfloor}\log n
\right),
& \text{if } 0<q<\lfloor n/2\rfloor,\\[6pt]
O\!\left(n-\lfloor q\rfloor\right),
& \text{if } \lfloor n/2\rfloor<q<n.
\end{cases}
\]
\end{theorem}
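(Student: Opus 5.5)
Both bounds will come from an explicit degree-$d$ SOS certificate of the integral cut $\sum_i x_i \ge \lceil q\rceil$ modulo the ideal $\langle x_i^2-x_i\rangle$ and the constraint $g(\x):=\sum_i x_i - q\ge 0$. Since every polynomial involved is symmetric, I would symmetrize: write $k=|\x|$, and reduce to univariate polynomials in $k$ on the integer points $\{0,\dots,n\}$. Concretely, I want an identity on the hypercube
\[
|\x| - \lceil q\rceil = \sigma_0(\x) + \sigma_1(\x)\, g(\x),
\]
with $\sigma_0,\sigma_1$ sums of squares of degree at most $d$. By a standard symmetrization argument, squares of symmetric univariate polynomials $p(k)^2$, together with the ``layer'' squares coming from products like $\prod_{i\in S}x_i$ averaged over $S$, are available at the degree one expects. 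This converts the problem into a univariate one: find polynomials $s_0\ge0$, $s_1\ge 0$ on $\{0,\dots,n\}$, each representable by low-degree hypercube squares, with
\[
k-\lceil q\rceil = s_0(k)+s_1(k)(k-q), \qquad k=0,\dots,n.
\]
Because $k-q<0$ exactly for $k\le \lfloor q\rfloor$, the multiplier $s_1$ must be large on the layers $k\le\lfloor q\rfloor$ and small, but nonnegative, above them.

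\textbf{Case $\lfloor n/2\rfloor<q<n$.} Here only $m=n-\lfloor q\rfloor$ layers $k\ge\lceil q\rceil$ are feasible. I would take $s_1(k)=c\prod_{j=\lceil q\rceil}^{n}(k-j)^2$, scaled by $c>0$ so that it dominates on the infeasible layers, or a variant that vanishes on the top layers. Its degree is $O(m)$, and its sign does not depend on $\widehat q$. This product is expressible through $\prod_{i\in S}(1-x_i)$-type terms of degree $O(m)$ (complement variables, since $n-k\le m$ on the feasible side). The remainder $s_0(k)=k-\lceil q\rceil - s_1(k)(k-q)$ then needs to be nonnegative. On feasible layers $s_1=0$, so $s_0=k-\lceil q\rceil\ge0$. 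On infeasible layers the scaling makes $s_0\ge0$. Finally, $s_0$ must itself be an SOS of degree $O(m)$ on the cube. I would obtain this by interpolating it in the basis of complement-monomial symmetric squares, using that a nonnegative function on the cube which is a polynomial of degree $\le 2m$ in $n-k$ has an SOS certificate of degree $O(m)$ (the known result that symmetric nonnegative functions of low degree in $n-k$ admit low-degree certificates).

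\textbf{Case $q<\lfloor n/2\rfloor$.} I would generalize the Kurpisz--Potechin--Wirth construction for $q\le 1/2$. Take $s_1(k)=p(k)^2$, where $p$ is a Chebyshev-type polynomial of degree $O(\sqrt n\log(2/\widehat q))$ that is large, of order $1/\widehat q$, on the layers $0,\dots,\lfloor q\rfloor$ and bounded on $[\lceil q\rceil,n]$. I would multiply by a factor $\prod_{j\le\lfloor q\rfloor}(k-j)$-like term of degree $O(\sqrt{n\lfloor q\rfloor}\log n)$ to control the $\lfloor q\rfloor+1$ low layers simultaneously, using Chebyshev amplification on $[\lceil q\rceil,n]$ separately for each of the $\lfloor q\rfloor$ points at a cost of $\sqrt n\log n$ in total. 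The near-zero factor $k-q\approx \widehat q$ at $k=\lfloor q\rfloor$ is what forces the $\log(1/\widehat q)$ term. I would then verify $s_0\ge0$ layerwise and invoke the earlier degree-preserving lifting of univariate nonnegative polynomials on $\{0,\dots,n\}$ to hypercube SOS.

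\textbf{Main obstacle.} The hardest step is the second case: making $s_0$ nonnegative on all layers while keeping the degree additive rather than multiplicative in the two terms. I would first try the product of a Chebyshev polynomial handling $\widehat q$ at the critical layer with a polynomial handling the remaining $\lfloor q\rfloor$ low layers, and bound their cross terms on $[\lceil q\rceil,n]$.
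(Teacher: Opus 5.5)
\textbf{The gap.} In both cases the whole difficulty is certifying $s_0$, and you rest that step on a lifting principle that is false. A low-degree univariate polynomial that is nonnegative on the integer points $\{0,\dots,n\}$ need not have a low-degree SOS certificate on the cube.

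The paper's own results refute it. For $q=\lfloor n/2\rfloor-\tfrac12$, the Chebyshev multiplier of Theorem~\ref{MK-rank} has degree $O(\sqrt n)$ and already makes $s_0\ge 0$ on every layer. Your principle would then give SOS rank $O(\sqrt n)$, contradicting the $\Omega(n)$ bound of Theorem~\ref{PROP:GrigorievLB}.

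The lifting that does hold, Lemma~\ref{lifting_to_hypercube}, needs nonnegativity on the real interval $[0,n]$. Your $s_0$ never has this when $s_1$ is a univariate square: for real $x\in(q,\lceil q\rceil)$ one has $s_0(x)=x-\lceil q\rceil-(x-q)s_1(x)<0$. So the real work is to isolate this sign change and pay for it separately, and your proposal does not do that.

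\textbf{How the paper handles the lower layers.} It takes $s_1=\tau^2$, where $\tau$ is a power of a shifted Chebyshev polynomial with two key properties:
\begin{itemize}
\item Its first root sits exactly at $\lceil q\rceil$. This is forced, since $s_0(\lceil q\rceil)=-(1-\widehat q)\,s_1(\lceil q\rceil)$, so ``bounded on $[\lceil q\rceil,n]$'' is not enough.
\item It stays below $\frac{x-\lceil q\rceil}{x-q}$ on the whole real interval $[\lceil q\rceil,n]$.
\end{itemize}
Its monotone growth to the left handles all low layers at once. By contrast, a factor like $\prod_{j\le\lfloor q\rfloor}(k-j)$ would vanish exactly where $s_1(k)\ge\frac{\lceil q\rceil-k}{q-k}\ge 1$ is required.

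A root count then gives $\sigma_0=(x-\lceil q\rceil)(x-r)p(x)$ with $r\in(\lfloor q\rfloor,q)$ and $p>0$ on $[0,n]$. The factor $p(|\x|)$ is lifted by Lemma~\ref{lifting_to_hypercube}. The symmetric quadratic $(|\x|-\lceil q\rceil)(|\x|-r)$ is certified at degree $O(\sqrt{n\lceil q\rceil}\log n)$ by the Kurpisz--Potechin--Wirth theorem on symmetric quadratic functions. That theorem, not anything in $s_1$, is the source of the $\sqrt{n\lfloor q\rfloor}\log n$ term. The additivity you worry about is simply multiplication of the two certificates.

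\textbf{Upper layers.} Your multiplier $s_1=c\prod_{j=\lceil q\rceil}^n(k-j)^2$ and the layerwise check are fine. But the ``known result'' you invoke for $s_0$ is the same false principle, and $s_0$ is again negative on real subintervals.

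The paper sidesteps this by choosing $\sigma_1=L^2+V$. Then $\sigma_0$ vanishes with odd multiplicity at every feasible layer and has only one further root $r\in[\lfloor q\rfloor,\lceil q\rceil)$. The sign-changing part is exactly $A_{n-\lfloor q\rfloor,\,n-r}(\mathbf 1-\x)$, which Lemma~\ref{SQF_remover} certifies.

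Your simpler multiplier can in fact be salvaged, but only with an argument you have not given:
\begin{itemize}
\item Set $t=n-|\x|$, $m=n-\lfloor q\rfloor$ and $(t)_m=t(t-1)\cdots(t-m+1)$. Then $s_0=(m-1-t)+c\,(t)_m^2(t-m+\widehat q)$.
\item Take $\ell$ of degree $<m$ with $\ell(i)^2=m-1-i$ for $i=0,\dots,m-1$. Then $s_0=\ell(t)^2+(t)_m\gamma(t)$, where $\gamma$ is a polynomial of degree $m+1$.
\item Since $(t)_m\ge m!$ and $t-m+\widehat q\ge\widehat q$ on $[m,n]$, $\gamma\ge 0$ there once $c$ is large.
\item Apply Luk\'acs' theorem to $\gamma$ on $[m,n]$. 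Combine it with three cube-SOS facts: $(t)_m=m!\binom{t}{m}$ is a sum of squares of degree-$m$ monomials in $\mathbf 1-\x$; $(t)_m(t-m)=(t)_{m+1}$; and $n-t=|\x|$. This gives a certificate of degree $O(m)$.
\end{itemize}
This would be shorter than the paper's interpolation analysis, but it has to be supplied.
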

As a consequence, we observe that sublinear SOS rank is typical for unweighted instances of the minimum knapsack problem when $q$ is sufficiently small. We make this intuition formal by considering `smoothed' choices of $q$, obtained by adding a small amount of (Gaussian) noise. Namely, we show that (see~\Cref{thm:smoothed-mk}):
\begin{theorem}
Let $0<q<\floor{n/2}$ with $\widehat{q}\ne 0$.
Let $\eta\sim\mathcal N(0,\sigma^2)$, where $\sigma=\sigma(n)\in(0,1)$ satisfies $\sigma=o(1)$.
Then, under perturbation of $q$ by $\eta$, the expected SOS rank of $\MK(q)$ is
\[
O\!\Bigl(
\sqrt{n}\bigl(
\sqrt{q}\,\log n
+ \log\!\bigl(1/\sigma\bigr)
\bigr)
\Bigr).
\]
\end{theorem}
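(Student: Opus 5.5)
The plan is to derive the smoothed bound directly from the upper bound in the preceding theorem (the case $0<q<\lfloor n/2\rfloor$) together with the trivial facts recalled earlier: $\MK(q)$ has SOS rank $0$ when $q$ is integral or $q\notin[0,n]$, and the rank never exceeds $n$. Write $q' = q+\eta$. Since $\sigma=o(1)$, the event $|\eta|>1$ has probability at most $e^{-1/(2\sigma^2)}$, which is super-polynomially small. On this event we bound the rank by $n$, contributing $o(1)$ to the expectation. On the complement $|\eta|\le 1$ we have $\lfloor q'\rfloor \le q+1$. If $q'$ leaves the range $(0,\lfloor n/2\rfloor)$, we are either at rank $0$ (for $q'\le 0$) or at the boundary. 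For the boundary case I would use the second branch of the upper bound, or simply note that it only happens when $q\ge \lfloor n/2\rfloor-1$; then $\sqrt{nq}\log n \gtrsim n$ already dominates the trivial bound $n$. This should be a small bookkeeping case.

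In the main range, the preceding theorem gives rank $O(\sqrt{n}\log(2/\widehat{q'}) + \sqrt{n\lfloor q'\rfloor}\log n)$. The second term is at most $O(\sqrt{n(q+1)}\log n)$. This is $O(\sqrt{n}\sqrt{q}\log n)$ when $q\ge 1$, and for $q<1$ the extra $\sqrt n\log n$ should be absorbed. Here I am uncertain: for $q<1$ and $|\eta|\le1$, $\lfloor q'\rfloor\le 1$. I would need either to accept an additive $\sqrt{n}\log n$ term, or to observe that $\lfloor q'\rfloor =1$ requires $\eta\ge 1-q$. In the latter case I would bound $\Pr[\eta\ge 1-q]$ and hope to absorb the term. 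If this fails, I would interpret the statement's $\sqrt q$ as $\sqrt{\max(q,1)}$, which is presumably the authors' intent.

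The key step, and the main obstacle, is bounding $\E[\log(2/\widehat{q'})]$ by $O(\log(1/\sigma))$ plus a constant. The density of $\widehat{q'}$ is the wrapped Gaussian $f(t)=\sum_{k\in\Z}\phi_\sigma(t-\widehat q + k)$ on $(0,1)$. Near each integer, this density is at most $\sum_k \phi_\sigma(\cdot)\le \frac{1}{\sigma\sqrt{2\pi}}+O(1)$, i.e.\ $f\le C/\sigma$ pointwise, since $\sigma<1$ makes the other terms sum to $O(1/\sigma)$ at most. Then
\[
\E\bigl[\log(2/\widehat{q'})\bigr] = \int_0^1 \log(2/t) f(t)\,dt \le \int_0^{\sigma}\frac{C}{\sigma}\log(2/t)\,dt + \log(2/\sigma)\int_\sigma^1 f(t)\,dt .
\]
The first integral equals $C(\log(2/\sigma)+1)$ and the second term is at most $\log(2/\sigma)$. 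Together this gives $O(\log(1/\sigma))$. (Integrality of $q'$ has probability zero, so the rank-$0$ case needs no handling.) Multiplying by $\sqrt n$ and adding the other contributions yields $O(\sqrt{n}(\sqrt q\log n+\log(1/\sigma)))$. The delicate point is only the pointwise density bound $C/\sigma$. I would prove it by splitting the wrapped sum into the nearest term, bounded by $1/(\sigma\sqrt{2\pi})$, and the tails, bounded by a geometric-type series $\sum_{k\ge1}\phi_\sigma(k-1/2)=O(1)$.
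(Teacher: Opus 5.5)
Your argument fails at the truncation to $|\eta|\le 1$ with the trivial bound $n$ on the complement. Under the hypothesis $\sigma=o(1)$ alone, $\Pr[|\eta|>1]\le e^{-1/(2\sigma^2)}$ is small as a function of $\sigma$, but not as a function of $n$. For $\sigma=1/\log\log n$ one has $\Pr[|\eta|>1]=n^{-o(1)}$, so this event contributes $n^{1-o(1)}$ to your bound. For $q=O(1)$ that is far above the target $O(\sqrt{n}\log n)$. Your claim that it contributes $o(1)$, or even something within the target, needs $\sigma=O(1/\sqrt{\log n})$.

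The fix is not to truncate. Apply the preceding theorem on the whole event $\{0<q'<\lfloor n/2\rfloor\}$:
\begin{itemize}
\item Your density argument for $\E[\log(2/\widehat{q'})]$ already integrates over all $\eta$, so the first term needs no change.
\item On this event $0\le\lfloor q'\rfloor\le q+|\eta|$, so the second term contributes at most $\sqrt{n}\log n\,(\sqrt q+\E[\sqrt{|\eta|}])=\sqrt{n}\log n\,(\sqrt q+O(\sqrt\sigma))$.
\item Keep the bound $n$ only for $\{q'\ge\lfloor n/2\rfloor\}$. If $\lfloor n/2\rfloor-q\ge\sqrt{2\log n}$, this event has probability at most $e^{-\log n/\sigma^2}\le 1/n$. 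Otherwise $q=\Omega(n)$, and $\sqrt{nq}\log n\ge n$ absorbs it.
\end{itemize}
This is in effect what the paper does. It splits by the unit interval $[k-1,k]$ containing $q'$ and uses the theorem's bound, with weight $\sqrt{k}\log n$, in every interval. So the Gaussian factor $e^{-(|k-\lceil q\rceil|-1)^2/(2\sigma^2)}$ only ever has to beat $\sqrt{k}$, never $n$.

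The rest is sound. Your wrapped-Gaussian bound $f\le C/\sigma$, with the split of $\int_0^1\log(2/t)f(t)\,dt$ at $t=\sigma$, is the paper's estimate of $I_k$. The paper also uses density at most $1/\sigma$ on $[0,\sigma]$ and $\log(1/u)\le\log(1/\sigma)$ on $[\sigma,1]$. You do this for all intervals at once instead of summing over $k$, which is tidier.

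Your worry about $\sqrt q$ is also justified. For $q=o(1)$, the $\sqrt{n}\log n$ cost of $\{\lfloor q'\rfloor=1\}$ cannot in general be absorbed through $\Pr[\eta\ge 1-q]$. With $\sigma^2=1/\log\log n$ this probability is $(\log n)^{-1/2+o(1)}$, which is too large to absorb when, say, $q\le 1/\log^2 n$. The paper's own bound on the three intervals $k\in C$ yields a factor $\sqrt{\lceil q\rceil+1}$. So it, too, effectively reads $\sqrt q$ as $\sqrt{\max\{q,1\}}$.
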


In particular, for any $q=q(n)\le O(1)$, choosing $\sigma \ge 1/\mathrm{poly}(n)$ yields an expected SOS rank of $O(\sqrt{n}\log n)$ for unweighted instances of $\MK$ with perturbed threshold parameter $q+\mathcal N(0,\sigma^2)$. 

This result aligns with the framework of smoothed complexity, which interpolates between worst-case and average-case complexity.
The SOS method has been studied through this lens in several works~\cite{RandomSOSsphere, TCSsmooth1, TCSsmooth2, TCSsmooth3, TCSsmooth4}.
However, the smoothed analysis regimes treated there were closer in spirit to average-case models.

In contrast, we study smoothed complexity for the SOS method in a regime that is closer to the worst-case, where arbitrary instances are perturbed by small Gaussian noise, following the framework and motivation of the seminal works on smoothed analysis~\cite{SpielmanT04, DunaganST11}.

\subsection{Technical Overview}
To prove the results stated above, we combine extensions of known methods with new techniques, which we briefly overview here.

For the lower bounds, we use two techniques. 
In Section \ref{baseline_lb_section}, we make a simple transformation of optimality certificates from our setting to refutation certificates for MK with equality constraints and combine this with Grigoriev's lower bound \cite{Grigoryev}. 
In Section \ref{integrality_lb_section}, we follow the general strategy of \cite{kurpisz:LIPIcs.ICALP.2019.79}, but replace their reliance on approximation-theoretic results for the NOR function with Coppersmith–Rivlin type inequalities \cite{doi:10.1137/0523054, erdelyi2014coppersmithrivlintypeinequalitiesorder}. 
This allows us to extend the lower bound for the regime $0<q<\floor{n/2}$ with fractional part in $(0,1/2]$ to the full range $(0,1)$.

For the upper bounds, we treat the lower and upper Hamming layers of the hypercube separately. In Section \ref{upper_layers_ub_section}, we introduce a new approach based on Lagrange interpolation to construct SOS polynomials with the desired properties.
In particular, we provide a low-degree certificate for a polynomial with consecutive roots at integer points and one additional root placed arbitrarily; see Lemma \ref{SQF_remover}. This result enriches the toolbox of functions admitting low-degree SOS certificates and may be of independent interest.
For the lower layers, in Section \ref{lower_layers_ub_section}, we follow the construction of \cite{kurpisz2021soscertificationsymmetricquadratic} and extend its applicability from the regime $q\in(0,1/2]$ to all $q\in(0,n)$ with non-zero fractional part.
Finally, in Section \ref{smoothed_section}, we use analytical techniques to derive bounds on the SOS rank under small Gaussian perturbations of the threshold parameter.

The proofs rely on technical lemmas collected in Section \ref{technical_section}: some are new (Section \ref{technical_section_upper_layers}), while others adapt prior results with appropriate parameters (Section \ref{technical_section_lower_layers}).

\section{Preliminaries}
For $n\in\Z_{\ge 1}$, let $[n]:=\{1,\ldots, n\}$. 
Denote by $\R[\x]:=\R[x_1,\ldots, x_n]$ the ring of $n$-variate real polynomials. 
For any $p \in \R[\x]$, there is multilinear polynomial $\ell$ such that $p(\x) = \ell(\x)$ for all $\x \in \{0, 1\}^n$. We say $p$ is symmetric if $p(\x) = p(\pi \x)$ for all $\pi \in S_n$. For any such~$p$, there is a univariate polynomial $\tilde p$ (of the same degree) such that $p(\x) = \tilde p(|\x|)$ for all $\x \in \{0, 1\}^n$, where  $|\x|:=\sum_{i\in[n]}x_i$ denotes the Hamming weight. 

\paragraph{Sum-of-squares.}
Our results are formulated within the standard sum-of-squares (SOS) framework for Boolean polynomial optimization. In this framework, one studies problems of the form
\[
\mathcal P:\quad \min f(\x)\quad\text{s.t.}\quad g_i(\x)\ge0\ \text{ for } i\in[m],\quad \x\in\{0,1\}^n,
\]
where $f,g_1,\dots,g_m\in\R[\x]$. Boolean constraints are enforced via the polynomial equalities
$x_i^2-x_i=0$ for $i\in[n]$. Let $f^*$ denote the optimal value.
The degree-$d$ SOS program of $\mathcal P$ is
\[
f_\Sigma^d:=\max_{s_0,~s_i}\left\{\lambda\in\R: f-\lambda =  s_0 + \sum_{i\in[m]} s_i g_i \right\}.
\]
where $s_0$ and $s_i$ for $i \in [m]$ are SOS polynomials of degree $2d$ (up to a shift by $\deg(g_i)$). Note that $s_0 + \sum_{i\in[m]} s_i g_i$ is   nonnegative over the feasibility set of $\mathcal{P}$; thus $f^d_\Sigma \leq f^*$ for every $d \in \mathbb{Z}_{\ge 0}$.
The program is called exact if $f^d_{\Sigma}=f^*$. The smallest $d\in\Z_{\ge 0}$ such that the degree-$d$ SOS program is exact is called the SOS rank, which we  denote by
\[
\SOSrk(\mathcal P) := \min\left\{d\in \Z_{\ge 0}:f_{\Sigma}^{d}=f^*_{\G}\right\}.
\]
Over the Boolean hypercube, the degree-$n$ SOS program is exact, and a degree-$d$ program can be solved via a semi-definite program (SDP) of size $O(m\sum_{k=0}^d\binom{n}{k})$~\cite{Parrilo2000}.

Here, we follow the degree convention of the dual side of the SOS hierarchy, see Lasserre~\cite{Lasserre01}, which differs by an additive constant of one for the MK problem. This is the conventional choice in prior work on the MK problem. 
In the specific case of the unweighted minimum knapsack problem $\MK(q)$, a degree-$d$ SOS certificate of optimality consists of SOS polynomials $s_0,s_1$ satisfying
\[
|\x|-\lceil q\rceil = s_0(\x) + s_1(\x)(|\x|-q)
\qquad\text{for all }\x\in\{0,1\}^n,
\]
with $\deg(s_0)\le 2d+2$ and $\deg(s_1)\le 2d$. Note that any degree-bounded identity holding for all $\x \in \{0,1\}^n$ can be realized as a polynomial identity modulo the Boolean hypercube constraints, without increasing degree.

\section{Lower Bounds on the SOS Rank}\label{lb_section}

In this section, we establish lower bounds on the SOS rank of $\MK(q)$. Our results show that $\MK(q)$ has linear SOS rank for multiple ranges of the parameter $q$. Since the SOS-rank of $\MK(q)$ is $0$ when $q$ is an integer or lies outside $[0,n]$, we restrict our attention to $q\in(0,n)$ with fractional part~{ $\widehat{q}\ne 0$}. 

\subsection{A Baseline Source of Hardness}\label{baseline_lb_section}
The first lower bound is a simple consequence of Grigoriev's main result in \cite{Grigoryev}.

\begin{theorem} \label{PROP:GrigorievLB}
    Let $0<q<n$ with $\widehat{q}\ne 0$. The SOS rank of $\MK(q)$ is lower bounded by 
    \[
    \Omega(\min\{\ceil{q},n-\floor{q}\}).
    \]
\end{theorem}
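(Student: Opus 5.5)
Given a degree-$d$ optimality certificate
\[
|\x|-\ceil{q} = s_0(\x) + s_1(\x)\,(|\x|-q) \qquad \text{for all } \x\in\{0,1\}^n,
\]
we will turn it into a low-degree Positivstellensatz refutation of the system $\sum_i x_i = r$, $x_i^2=x_i$, with $r$ non-integral. Grigoriev~\cite{Grigoryev} showed that such a refutation requires degree $\Omega(\min\{\ceil{r}, n-\floor{r}\})$, roughly $\min\{r,n-r\}$. The natural choice is $r=q$. The transformation goes as follows.

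\textbf{Step 1.} Work in the ideal generated by $x_i^2-x_i$ and $|\x|-q$. Modulo this ideal, $|\x|$ can be replaced by $q$. The identity above holds on the cube, so it holds modulo the Boolean ideal with degree at most $2d+2$. Reducing it further modulo $|\x|-q$ gives
\[
q-\ceil{q} \equiv s_0(\x).
\]
This yields the identity
\[
-1 = \frac{1}{\ceil{q}-q}\bigl(s_0 + s_1(|\x|-q)\bigr) + (\text{combination of } |\x|-q \text{ and } x_i^2-x_i).
\]
More concretely,
\[
s_0 + (\ceil{q}-q) = |\x|-q - s_1(|\x|-q) = (1-s_1)(|\x|-q),
\]
and the right-hand side is an ideal multiple of degree at most $2d+1$. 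Dividing by $\ceil{q}-q>0$ gives $-1 = \sigma + h\cdot(|\x|-q) + \sum_i g_i(x_i^2-x_i)$, where $\sigma = s_0/(\ceil{q}-q)$ is SOS of degree at most $2d+2$ and $h$ has degree at most $2d$. This is exactly a degree-$O(d)$ Positivstellensatz/SOS refutation of $\sum_i x_i = q$ over the hypercube.

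\textbf{Step 2.} Invoke Grigoriev's theorem, which states that any such refutation has degree at least $\min\{2\floor{q}+3,\, 2(n-\floor{q})+1\}$, or an equivalent $\Omega(\min\{q,n-q\})$ form. Combining this with $2d+2 \geq$ that bound gives $d=\Omega(\min\{\ceil{q},\,n-\floor{q}\})$. The edge cases where $\floor{q}=0$ or $\floor{q}=n-1$ only assert $\Omega(1)$, which is trivial since the rank is at least $1$ for non-integral $q$: at $d=0$ the function $|\x|-\ceil{q}$ would have to be SOS plus a nonnegative multiple of $|\x|-q$, and this fails.

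\textbf{Main obstacle.} The delicate point is matching the degree conventions and the exact form of Grigoriev's statement. His bound is stated for refutation degree in the Positivstellensatz calculus, so we need to verify that the degrees of $\sigma$ and of the multiplier of $|\x|-q$ translate into his notion, which bounds the total degree of $\sigma$ and of $h\cdot(|\x|-q)$. We also need to check that his result applies to all non-integral $q$, and not only to $q$ with $\widehat q=1/2$; his original statement covers general non-integer $r$ via $\min(\floor{r},n-\floor{r})$. If his theorem were stated only for a specific $r$, we would instead reduce via a shift or symmetry $\x\mapsto \mathbf1-\x$ to cover the upper range.
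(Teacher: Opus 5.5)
Your proposal is correct and matches the paper's proof: the paper also rewrites the certificate as $-1 = \frac{s_0}{\ceil{q}-q} + \frac{s_1-1}{\ceil{q}-q}(|\x|-q)$ on the hypercube and then invokes Grigoriev's lower bound for the knapsack equation. The only cosmetic differences are that the paper treats $q<\ceil{n/2}$ and appeals to symmetry for the other half, whereas you use the two-sided form of Grigoriev's bound directly, and that your extra $d=0$ check is a harmless addition.
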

\begin{proof}
    Suppose $0<q<\ceil{n/2}$ (the other case is symmetric). Assume there are SOS polynomials $s_0,s_1$ with
    \[
    |\x|-\ceil{q}=s_0(\x)+s_1(\x)(|\x|-q)\quad \text{for any $\x\in\{0,1\}^n$.}
    \]
    Then, we also have
    \[
    -1=h(\x)+g(\x)(|\x|-q)\quad \text{for any $\x\in\{0,1\}^n$,}
    \]
    where $g(\x)=\frac{s_1(\x)-1}{\ceil{q}-q}$ and $h(\x)=\frac{s_0(\x)}{\ceil{q}-q}$ is an SOS polynomial.

    However, by the main theorem in \cite{Grigoryev}, the degrees of $h$ and $g$ are lower bounded by $\Omega(\ceil{q})$, and the result follows.
\end{proof}

Thus, the SOS rank depends on the location of the constraint $|\x|\ge q$ relative to the Hamming layers. In particular, this shows the corollary below.

\begin{corollary}
    If $q=\Theta(n)$, $n-q=\Theta(n)$, and $\widehat{q}\ne 0$, then the SOS rank of $\MK(q)$ is $\Theta(n)$. 
\end{corollary}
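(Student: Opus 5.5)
The corollary has two halves: a lower bound of $\Omega(n)$ and an upper bound of $O(n)$.

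For the lower bound, I would apply Theorem~\ref{PROP:GrigorievLB} directly. Since $\widehat{q}\neq 0$ and $0<q<n$, it gives an SOS rank of at least $c\cdot\min\{\lceil q\rceil,\, n-\lfloor q\rfloor\}$. The hypothesis $q=\Theta(n)$ gives $\lceil q\rceil\ge q=\Omega(n)$. The hypothesis $n-q=\Theta(n)$ gives $n-\lfloor q\rfloor\ge n-q=\Omega(n)$. So the minimum is $\Omega(n)$, and so is the rank.

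For the upper bound, I would use the fact recalled in the Preliminaries that over the Boolean hypercube the degree-$n$ SOS program is exact. Hence $\SOSrk(\MK(q))\le n = O(n)$.

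To be safe about the degree convention, one can also write down an explicit certificate. Every function on $\{0,1\}^n$ agrees with a multilinear polynomial of degree at most $n$. The function $\x\mapsto |\x|-\lceil q\rceil$ is nonnegative on the feasible set $\{|\x|\ge q\}$. Define
\begin{itemize}
\item $s_1(\x)$ to equal $(\lceil q\rceil-|\x|)/(q-|\x|)\ge 0$ on points with $|\x|<q$, and $0$ elsewhere;
\item $s_0(\x)$ to equal $|\x|-\lceil q\rceil\ge 0$ on points with $|\x|\ge q$, and $0$ elsewhere.
\end{itemize}
Both are nonnegative functions on the cube, so each is the square of the multilinear interpolant of its square root. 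This gives SOS polynomials of degree at most $2n$ satisfying the required identity on $\{0,1\}^n$. The resulting rank is at most $n$, up to the additive constant in the degree convention, which is irrelevant for $\Theta(n)$.

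Combining the two bounds gives $\Theta(n)$. There is no real obstacle here; the only point needing care is the degree bookkeeping in the trivial upper bound.
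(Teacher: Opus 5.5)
Your proposal is correct and matches the paper's reasoning. The lower bound comes from Theorem~\ref{PROP:GrigorievLB}, since both $\lceil q\rceil$ and $n-\lfloor q\rfloor$ are $\Omega(n)$; the upper bound is the trivial exactness of the degree-$n$ program over the hypercube, and your explicit certificate correctly confirms it under the paper's degree convention ($\deg s_1\le 2n$, $\deg s_0\le 2n$, so $d=n$ suffices).
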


When the constraint cuts through the middle Hamming layers, the SOS rank of $\MK(q)$ is necessarily linear.

\subsection{An Integrality Sensitive Source of Hardness}\label{integrality_lb_section}

The next theorem is based on the observation that when $\widehat{q}$ is small, i.e., $q$ is near an integer, in any SOS certificate $s_0,s_1$ for $\MK(q)$, the multiplier polynomial $s_1$ must take large values on Boolean points of Hamming weight $\floor{q}$, while remaining bounded on points of larger Hamming weight. By symmetry, see, e.g.~\cite{Grigoryev}, this reduces to the study of a univariate polynomial at the integer points $\floor{q},\ldots, n$ with sharply separated adjacent values. 

A degree lower bound for such polynomials, originally shown by Coppersmith and Rivlin in \cite{doi:10.1137/0523054} and used here in the form due to Erdélyi~\cite{erdelyi2014coppersmithrivlintypeinequalitiesorder}, yields the desired SOS rank lower bound.

\begin{theorem} \label{lb_MK}
    Let $0<q<\floor {n/2}$ with $\widehat{q}\ne 0$. The SOS rank of $\MK(q)$ is lower bounded by
    \[
    \begin{cases}
    \Omega\left(\min\left\{n, \sqrt{n \log 1/\widehat q}\right\}\right) & \text{ if } \quad 0 < \widehat q \leq 1/2,
    \\
    \Omega\left(\sqrt{n (1-\widehat{q})}\right) & \text{ if } \quad 1/2 < \widehat q < 1.
    \end{cases}
    \]
\end{theorem}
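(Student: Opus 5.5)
Assume a degree-$d$ certificate $|\x|-\ceil{q} = s_0(\x)+s_1(\x)(|\x|-q)$ on $\{0,1\}^n$. The plan is to symmetrize, pass to a univariate multiplier, and then apply the Coppersmith--Rivlin inequality in Erdélyi's form.

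\textbf{Symmetrization.} Average the certificate over $S_n$. As in~\cite{Grigoryev}, the averaged multipliers remain sums of squares of the same degree, and their restrictions to the cube are univariate polynomials in $k=|\x|$. This gives univariate polynomials $\tilde s_0,\tilde s_1$ of degrees at most $2d+2$ and $2d$ with $\tilde s_0,\tilde s_1\ge 0$ at $k=0,\dots,n$ and
\[
k-\ceil{q} = \tilde s_0(k)+\tilde s_1(k)(k-q), \qquad k=0,\dots,n .
\]

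\textbf{Values of the multiplier at the integers.} Write $m=\floor{q}$.
\begin{itemize}
\item At $k=m$ we have $k-\ceil{q}=-1$ and $k-q=-\widehat q$. Since $\tilde s_0\ge 0$, this forces $\tilde s_1(m)\ge 1/\widehat q$.
\item At $k=m+1=\ceil{q}$ the left side vanishes. Both terms on the right are nonnegative, so $\tilde s_1(m+1)=0$.
\item For $k\ge m+1$ we get $0\le \tilde s_1(k)(k-q)\le k-\ceil{q}$, hence $0\le \tilde s_1(k)\le 1/(1-\widehat q)$ for all $k\in\{m+1,\dots,n\}$ (the bound is $0$ at $k=m+1$ and at most $(k-m-1)/(k-m-\widehat q)\le 1/(1-\widehat q)$ beyond).
\end{itemize}
So $p:=(1-\widehat q)\tilde s_1$ has degree at most $2d$, satisfies $|p|\le 1$ on the $N:=n-m$ integers $m+1,\dots,n$, and has $|p(m)|\ge (1-\widehat q)/\widehat q$ at the adjacent integer. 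The hypothesis $q<\floor{n/2}$ gives $N\ge n/2$, so all length parameters below are $\Theta(n)$.

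\textbf{Applying the extrapolation bound.} Shift the variable so the integer grid is $\{0,\dots,N-1\}$ and $m$ becomes $-1$. The Coppersmith--Rivlin inequality in Erdélyi's form~\cite{erdelyi2014coppersmithrivlintypeinequalitiesorder} bounds such extrapolation by $|p(-1)|\le \exp(O(\deg(p)^2/N))$ for $\deg(p)\le \sqrt{N}$-type ranges, and by roughly $\exp(O(\deg(p)))$ beyond. This yields:
\begin{itemize}
\item For $\widehat q\le 1/2$: $\log(1/\widehat q)-O(1)\le O(d^2/n)$ in the quadratic regime, giving $d=\Omega(\sqrt{n\log(1/\widehat q)})$. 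When this exceeds $n$, the linear regime gives $d=\Omega(\min\{n,\log(1/\widehat q)\})$. Together with $d\le n$, the combined statement is $\Omega(\min\{n,\sqrt{n\log 1/\widehat q}\})$. For $\widehat q$ near $1/2$ the right side is a constant, and the constant bound is trivial.
\item For $\widehat q>1/2$ the size of $p(m)$ is useless. Instead I would exploit the root $\tilde s_1(m+1)=0$ next to the value $\tilde s_1(m)\ge 1/\widehat q\ge 1$. Consider $r:=\widehat q\,\tilde s_1$, which is bounded by $\widehat q/(1-\widehat q)$ on the grid and jumps from $0$ to at least $1$ between adjacent points. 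A Markov-type inequality for polynomials bounded on $N$ equispaced points (the derivative form of Coppersmith--Rivlin/Ehlich--Zeller) gives jump $\le O(d^2/N)\cdot\max|r|$. Since $\max|r|\approx 1/(1-\widehat q)$, this yields $d^2\ge \Omega(n(1-\widehat q))$.
\end{itemize}

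\textbf{Main obstacle.} The hard part is invoking the discrete inequalities in exactly the needed form. The Markov bound needs the boundedness on the grid to control the polynomial at the adjacent off-grid integer $m$, not only between grid points. If the direct form fails, I would first try to handle this by applying Erdélyi's inequality to $r-c$ for a suitable constant $c$, or by including $m$ in the grid with a bound of $O(1/(1-\widehat q))$ obtained from the first case.
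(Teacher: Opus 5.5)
\textbf{Gap near $\widehat q=1/2$.} Your first case does not cover $\widehat q$ near $1/2$, and the cause is a bound you loosened for no reason. You wrote $\tilde s_1(k)\le 1/(1-\widehat q)$ on $\{\ceil{q},\dots,n\}$. But the constraint you derived, $\tilde s_1(k)\le (k-\ceil{q})/(k-q)$, is already $<1$ for every $k>q$, since $\ceil{q}>q$. After your rescaling $p=(1-\widehat q)\tilde s_1$, you only know $|p(m)|\ge (1-\widehat q)/\widehat q$ against $|p|\le 1$ on the grid. At $\widehat q=1/2$ this ratio is $1$, so the extrapolation inequality gives nothing. On $[c,1/2]$ it only gives $\log\frac{1-\widehat q}{\widehat q}\le O(d^2/n)$, whose left side tends to $0$.

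Your justification that ``the right side is a constant'' there is false. For $\widehat q\le 1/2$ the target is $\min\{n,\sqrt{n\log(1/\widehat q)}\}\ge\sqrt{n\ln 2}$, i.e.\ $\Omega(\sqrt n)$. Since your second argument is only run for $\widehat q>1/2$, a window of constant width below $1/2$ is left unproved.

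The paper avoids this by using $\tilde S\le 1$ on the grid and normalizing $P(x)=\tilde S(x+\floor{q})/\tilde S(\floor{q})$. Then $P(0)=1$ and $0\le P(j)\le\widehat q$ for $j=1,\dots,N$, and Erd\'elyi's Theorem 2.2 applied to this single $P$ yields both regimes at once.

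Separately, you describe the extrapolation growth as ``roughly $\exp(O(\deg p))$'' beyond degree $\sqrt N$. That is too weak for your purpose: with it, the range $1\ll\log(1/\widehat q)\ll n$ only yields $\Omega(\sqrt n+\log(1/\widehat q))$. What you need, and what Coppersmith--Rivlin provides, is growth $\exp(O(k^2/N))$ for all degrees $k\le N$.

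\textbf{The case $\widehat q>1/2$.} Here your argument takes a genuinely different route from the paper, and it is sound. The paper never uses the forced zero $\tilde s_1(\ceil{q})=0$; it simply invokes the $\delta>1/2$ regime of Erd\'elyi's theorem. You instead turn that zero into a jump of size at least $1$ between the adjacent integers $m$ and $m+1$.

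The obstacle you flag is handled by the tools you already cite. Let $k:=\deg\tilde s_1\le c\sqrt N$. Then:
\begin{itemize}
\item the grid bound controls $\|r\|_{[m+1,n]}$ up to an absolute constant, by Coppersmith--Rivlin;
\item one unit of Chebyshev extrapolation to $m$ costs only a factor $|T_k(1+2/(N-1))|\le e^{2k/\sqrt{N-1}}=O(1)$;
\item Markov's inequality on $[m,n]$ then gives $|r(m)-r(m+1)|\le O(k^2/N)\max_{\mathrm{grid}}|r|$.
\end{itemize}
For $k>c\sqrt N$ the claimed bound is trivial.

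Moreover, if you combine the jump argument with the sharp bound $\tilde s_1\le 1$ on the grid, the jump is at least $1/\widehat q>1$ against a grid bound of $1$. The same argument then gives $\Omega(\sqrt n)$ for every $\widehat q\in(0,1)$. This closes your gap near $1/2$, and it even improves the stated $\Omega(\sqrt{n(1-\widehat q)})$ when $\widehat q$ is close to $1$.
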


\begin{proof}
    Assume there exist SOS polynomials $s_0,s_1$ with
    \[
    |\x|-\ceil{q}=s_0(\x)+s_1(\x)(|\x|-q)\quad \text{for any $\x\in\{0,1\}^n$.}
    \]
    Since $s_0(\x)\ge 0$ for every $\x\in\{0,1\}^n$, this implies the following two points:
    \begin{enumerate}
        \item $s_1(\x)\ge \frac{|\x|-\ceil{q}}{|\x|-q}$ for any $\x\in\{0,1\}^n$ with $|\x|\le \floor{q}$,
        \item $0\le s_1(\x)\le \frac{|\x|-\ceil{q}}{|\x|-q}$ for any $\x\in\{0,1\}^n$ with $|\x|\ge \ceil{q}.$
    \end{enumerate}
    Consider the symmetrization of $s_1$, defined by
    \[
    S(\x):=\frac{1}{n!}\sum_{\pi\in S_n}s_1(\pi \x).
    \]
    Then $\deg(S)\le \deg(s_1)=:d$ and $S$ fulfills both points. 
    Recall that since $S$ is symmetric, there exists a univariate polynomial $\tilde{S}$ with $\deg(\tilde{S})\le d$ such that $S(\x)=\tilde{S}(|\x|)$.
    
    In particular, $\tilde{S}(\floor{q})\ge 1/\widehat{q}>0$ and $\tilde{S}(x)\le \frac{n-\ceil{q}}{n-q}\le 1$ for $x=\ceil{q},\ldots, n$.
    Thus, defining $P(x):=\frac{\tilde{S}(x+\floor{q})}{\tilde{S}(\floor{q})}$, we get 
    \[
    P(0)=1 \quad \text{and}\quad P(x)\in[0,\widehat{q}]\text{ for any } x\in[n-\floor{q}].
    \]
    By Theorem 2.2 in \cite{erdelyi2014coppersmithrivlintypeinequalitiesorder}, we obtain the bound
    \[
    \deg(P)=\Omega\left(\min\left\{n-\floor{q}, \sqrt{(n-\floor{q})\log\left(1/\widehat{q}\right)}\right\}\right),
    \]
    whenever $0<\widehat{q}\le 1/2$ and 
    \[
    \deg(P)=\Omega\left(\sqrt{(n-\floor{q})(1-\widehat q)}\right), 
    \]
    otherwise. Since $\deg(P)=\deg(\tilde{S})\le d$, the claim follows.
\end{proof}

The following corollary isolates the extreme regime in which $\widehat{q}$ is small, where the SOS rank becomes essentially maximal.

\begin{corollary}
    Let $0<q< \floor {n/2}$ with $\widehat{q}\ne 0$. If $\widehat{q}\in(0,2^{-n}]$, then the SOS rank of $\MK(q)$ is $\Theta(n)$.
\end{corollary}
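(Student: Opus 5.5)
The corollary asks for two bounds, and I would prove them separately: a lower bound of $\Omega(n)$ and an upper bound of $O(n)$.

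\textbf{Upper bound.} This part is immediate. As recalled in the preliminaries, the degree-$n$ SOS program over the Boolean hypercube is exact. Hence $\SOSrk(\MK(q))\le n=O(n)$ for every $q$.

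\textbf{Lower bound.} I would apply \Cref{lb_MK} in its first case, since $0<\widehat q\le 2^{-n}\le 1/2$ for $n\ge 1$. That case gives
\[
\SOSrk(\MK(q)) = \Omega\left(\min\left\{n, \sqrt{n\log(1/\widehat q)}\right\}\right).
\]
Because $\widehat q\le 2^{-n}$, we have $\log(1/\widehat q)\ge n\log 2$. Therefore $\sqrt{n\log(1/\widehat q)}\ge \sqrt{\log 2}\, n$, and the minimum is at least $\min\{1,\sqrt{\log 2}\}\,n=\Omega(n)$.

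\textbf{Only point to check.} The constant hidden in the $\Omega$ of \Cref{lb_MK} does not depend on $q$. This is so because the underlying bound there is $\Omega\left(\min\{n-\floor{q},\sqrt{(n-\floor q)\log(1/\widehat q)}\}\right)$, and the hypothesis $q<\floor{n/2}$ gives $n-\floor q\ge n/2$. So $n-\floor q=\Theta(n)$ uniformly in $q$, and the same substitution of $\log(1/\widehat q)\ge n\log 2$ goes through. Combining the two bounds gives $\SOSrk(\MK(q))=\Theta(n)$.
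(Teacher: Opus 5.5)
Your proof is correct and follows the paper's reasoning. The paper leaves this step implicit: it substitutes $\widehat q\le 2^{-n}$, i.e.\ $\log(1/\widehat q)\ge n\log 2$, into the first case of \Cref{lb_MK}, and the matching $O(n)$ upper bound comes from exactness of the degree-$n$ SOS program. Your observation that $n-\floor{q}>n/2$ keeps the hidden constant uniform in $q$ is a detail the paper does not spell out.
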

Thus, when the constraint $|\x|\ge q$ cuts through the lower Hamming layers of the hypercube, SOS hardness becomes highly sensitive to integrality.

\section{Upper Bounds on the SOS Rank}\label{ub_section}

We establish upper bounds on the SOS rank in the upper and lower Hamming layers of the hypercube, and show that hardness in the lower layers is fragile.
In the next two subsections, we provide SOS certificates by defining
$s_i(\x):=\sigma_i(|\x|)$ for suitable univariate polynomials $\sigma_i$.
Throughout, $\sigma_0$ is defined in terms of $\sigma_1$ as
\[
\sigma_0(x):=x-\ceil{q}-(x-q)\sigma_1(x).
\]
We will repeatedly use the following lifting result, which allows us to convert
univariate nonnegativity into SOS certificates over the Boolean hypercube.

\begin{lemma} \label{lifting_to_hypercube}
Let $p$ be a univariate polynomial that is nonnegative on $[0,n]$.
Then $p(|\x|)$ admits an SOS certificate over $\{0,1\}^n$ of degree $O(\deg(p))$.
More precisely, depending on the parity of $\deg(p)$, there exist SOS polynomials
$t_1,t_2$ such that
\[
p(x)=t_1(x)+t_2(x)\,x(n-x)
\quad\text{or}\quad
p(x)=x\,t_1(x)+(n-x)\,t_2(x),
\]
and both $|\x|$ and $n-|\x|$ admit SOS certificates of constant degree over $\{0,1\}^n$.
\end{lemma}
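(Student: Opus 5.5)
The plan is to combine the classical Markov--Lukács (Pólya--Szegő) representation of polynomials nonnegative on an interval with the observation that, on the hypercube, $|\x|$ and $n-|\x|$ are sums of squares of degree-one polynomials.

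\textbf{Step 1 (univariate representation).} Let $p$ be nonnegative on $[0,n]$ with $\deg(p)=m$. By the Markov--Lukács theorem, applied after the affine change of variables mapping $[0,n]$ to $[-1,1]$, which preserves degrees and sums of squares, we get:
\begin{itemize}
\item if $m=2k$ is even, $p(x)=t_1(x)+t_2(x)\,x(n-x)$ with $t_1,t_2$ univariate SOS, $\deg t_1\le 2k$ and $\deg t_2\le 2k-2$;
\item if $m=2k+1$ is odd, $p(x)=x\,t_1(x)+(n-x)\,t_2(x)$ with $t_1,t_2$ univariate SOS of degree at most $2k$.
\end{itemize}
This is exactly the displayed dichotomy, so I would cite it rather than reprove it. If a self-contained argument is wanted, factor $p$ over $\R$ and group the roots as follows.
\begin{itemize}
\item Complex-conjugate pairs and real roots outside $(0,n)$ give factors of the form $(x-a)^2+b^2$, $(x-a)$ with $a\le 0$, or $(a-x)$ with $a\ge n$.
\item Real roots inside $(0,n)$ have even multiplicity, since $p\ge0$ there.
\item Linear factors $x-a$ with $a\le 0$ are written as $x+|a|$, a nonnegative combination of $x$ and $n-x$; similarly for $a\ge n$.
\item Products of such factors are reduced using $x(n-x)$ and squares, since $x^2$, $(n-x)^2$ and $x(n-x)$ are already in the desired form.
\end{itemize}
The grouping is the one messy part, and it is why I would cite the classical theorem.

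\textbf{Step 2 (lift to the hypercube).} Substitute $x=|\x|$. Each univariate SOS $t_i(x)=\sum_j r_j(x)^2$ becomes $\sum_j r_j(|\x|)^2$, which is SOS in $\x$ of the same degree. Next,
\[
|\x|=\sum_{i=1}^n x_i = \sum_{i=1}^n x_i^2 \pmod{x_i^2-x_i},
\]
so $|\x|$ is SOS of degree $2$ over $\{0,1\}^n$. Likewise,
\[
n-|\x|=\sum_{i=1}^n (1-x_i)^2 \pmod{x_i^2-x_i}.
\]
A product of two SOS polynomials is SOS, so $|\x|\,(n-|\x|)$ is SOS of degree $4$.

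Multiplying out, $t_1(|\x|)$, $t_2(|\x|)\,|\x|(n-|\x|)$, $|\x|\,t_1(|\x|)$ and $(n-|\x|)\,t_2(|\x|)$ are all SOS over the hypercube. Each has degree at most $\deg(p)+2$, which is $O(\deg(p))$. Summing them gives the SOS certificate for $p(|\x|)$, using the paper's remark that identities on $\{0,1\}^n$ can be realized modulo the Boolean ideal without raising the degree.

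\textbf{Main obstacle.} The only nontrivial input is the interval Positivstellensatz in Step 1, with its sharp degree bounds; Step 2 is routine bookkeeping. I expect no real difficulty beyond quoting that theorem correctly. One point to check is the parity convention: even degree corresponds to the $t_1+t_2\,x(n-x)$ form, and odd degree to the $x\,t_1+(n-x)\,t_2$ form.
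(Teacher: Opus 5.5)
Your proposal is correct and follows the paper's route: the paper cites the Markov--Lukács theorem (Theorem~3.72 in the cited SIAM volume) for the univariate dichotomy and then lifts via the substitution $x=|\x|$. Your explicit certificates $|\x|\equiv\sum_i x_i^2$ and $n-|\x|\equiv\sum_i(1-x_i)^2$ modulo $x_i^2-x_i$, together with the degree bound $\deg(p)+2$, supply the details the paper leaves implicit.
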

This representation is a consequence of classical results on nonnegative univariate
polynomials on an interval; we use it here
in a form convenient for our setting. See, e.g.,~\cite[Theorem~3.72]{doi:10.1137/1.9781611972290}.

\subsection{The Upper Layers of the Hypercube}\label{upper_layers_ub_section}

We begin with the upper Hamming layers of the hypercube, corresponding to parameters $\lfloor n/2\rfloor < q < n$. 
Our goal is to construct a univariate polynomial $\sigma_1$ such that the resulting polynomials $s_0$ and $s_1$ have SOS certificates of small degree.
In our construction, we exploit the following lemma, inspired by~ \cite[Section 4]{10.5555/2982445.2982462}.

\begin{lemma} \label{SQF_remover}
    Let $k\in[n]$ and $r\in(k-1,k]$. The polynomial
    \[
    A_{k,r}(\x)=(|\x|-r)\prod_{j=0}^{k-1}(|\x|-j)
    \]
    has an SOS certificate of degree $O(k)$.
\end{lemma}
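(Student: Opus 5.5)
The plan is to bypass the univariate lifting of \Cref{lifting_to_hypercube}, which does not apply directly here. The reason is that $A_{k,r}(x)$ changes sign between its roots $0,1,\dots,k-1$, so it is not nonnegative on $[0,n]$; it is only nonnegative at the integer points. Instead, I will work with multilinear monomials on the hypercube and use that every monomial $x_S:=\prod_{i\in S}x_i$ satisfies $x_S = x_S^2$ on $\{0,1\}^n$. Every monomial is therefore a square modulo the Boolean constraints.

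\textbf{Step 1 (falling factorial as a sum of monomials).} I will verify the identity
\[
\prod_{j=0}^{k-1}(|\x|-j) \;=\; k!\sum_{S\subseteq[n],\,|S|=k} x_S \qquad \text{for all } \x\in\{0,1\}^n .
\]
Both sides depend only on $m=|\x|$. The left side equals $m(m-1)\cdots(m-k+1)$. The right side equals $k!\binom{m}{k}$, because $x_S=1$ exactly when $S$ lies in the support of $\x$. These two expressions agree for every integer $m\ge 0$, including $m<k$, where both vanish.

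\textbf{Step 2 (absorbing the extra factor).} For each $S$ with $|S|=k$, every Boolean point with $x_S=1$ has $|\x| = k+\sum_{i\notin S}x_i$. Hence on $\{0,1\}^n$
\[
x_S(|\x|-r) \;=\; (k-r)\,x_S + \sum_{i\notin S} x_{S\cup\{i\}} \;=\; (k-r)\,x_S^2 + \sum_{i\notin S} x_{S\cup\{i\}}^2 .
\]
Since $r\le k$, the coefficient $k-r$ is nonnegative. Multiplying Step 1 by $(|\x|-r)$ and summing over $S$ then gives
\[
A_{k,r}(\x) \;=\; k!\sum_{|S|=k}\Bigl[(k-r)\,x_S^2+\sum_{i\notin S}x_{S\cup\{i\}}^2\Bigr] \qquad \text{on } \{0,1\}^n .
\]
This is an explicit sum of squares of polynomials of degree at most $k+1$, so the certificate has degree $2(k+1)=O(k)$. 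As noted in the preliminaries, any identity that holds on the hypercube can be realized modulo the ideal generated by $x_i^2-x_i$ without raising the degree.

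\textbf{Main obstacle.} The only delicate point is Step 2: the factor $|\x|-r$ must be rewritten as a nonnegative combination of squares on the support of $x_S$. This step is exactly where the hypothesis $r\le k$ is used. The hypothesis $r>k-1$ plays no role in nonnegativity; it only makes $r$ the additional root adjacent to the consecutive integer roots, which is how the lemma will be applied later. If one wanted a symmetric certificate in the sense of $\sigma_i(|\x|)$, one could average the construction over $S_n$; the expression above is already symmetric, so this is not needed.
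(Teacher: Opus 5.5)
Your proof is correct and is essentially the paper's argument. The paper writes $A_{k,r}(\x)=(k-r)k!\,e_k(x_1^2,\dots,x_n^2)+(k+1)!\,e_{k+1}(x_1^2,\dots,x_n^2)$ on $\{0,1\}^n$, using the identity $(m-r)\,k!\binom{m}{k}=(k-r)\,k!\binom{m}{k}+(k+1)!\binom{m}{k+1}$. Your sum collapses to exactly this once you note that each $(k+1)$-set $T$ arises $k+1$ times as $S\cup\{i\}$, and your per-set handling of the factor $|\x|-r$ is a combinatorial check of that same identity, using $r\le k$ in the same place.
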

\begin{proof}
    For $j=0,\ldots, n$, let $e_j(\x)=\binom{|\x|}{j}$. Using basic binomial identities, it can be readily checked that
    \begin{align*}
    A_{k,r}(\x)
    = (k-r)k!e_k(x_1^2,\dots,x_n^2) + (k+1)!e_{k+1}(x_1^2,\dots,x_n^2),
    \end{align*}
    for all $\x\in\{0,1\}^n$, which is an SOS polynomial of degree $2k+2$.
\end{proof}

The idea is to force the polynomial $\sigma_0$ to have roots exactly in $\ceil{q}, \ceil{q}+1,\ldots, n$ and a single other point $\floor{q}\le r<\ceil{q}$, and to apply Lemma \ref{lifting_to_hypercube} in combination with Lemma \ref{SQF_remover} to obtain the desired certificate.
Thus, we need $\sigma_1$ to cross the red hyperbola in Figure \ref{corrected_interpolation} in exactly the points~{$r, \ceil{q}, \ldots, n$}.

\begin{figure}
    \centering
    \includegraphics[width=0.8\linewidth]{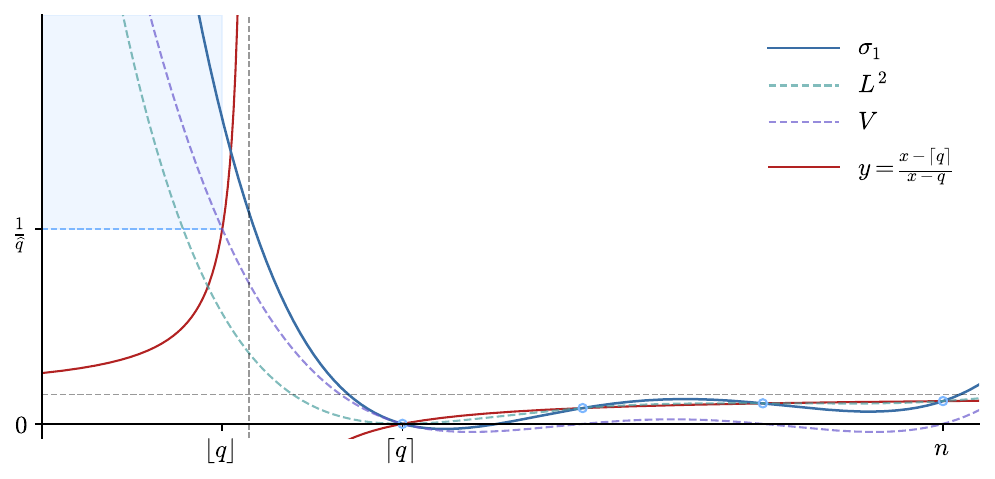}
    \caption{Interpolation geometry for the univariate $\sigma_1$ when $q>\floor{n/2}$.}
\label{corrected_interpolation}
\end{figure}

We therefore start with an SOS polynomial $L^2$, where $L$ interpolates a function whose square matches the hyperbola at the points $\ceil{q},\ceil{q}+1,\ldots, n$.
However, when the pole at $q$ is too close to $\floor{q}$, the function $L^2$ may exit the blue region in Figure \ref{corrected_interpolation} too early, making $\sigma_0$ negative in $\floor{q}$. Therefore, we compensate by adding a polynomial $V$ that is large enough in $\floor{q}$, vanishes in $\ceil{q},\ldots, n$, and admits an SOS certificate of small degree.

\begin{theorem}\label{upper_layers_ub}
    Let $\lfloor n/2\rfloor < q < n$ with $\widehat{q}\ne 0$. 
    Then the SOS rank of $\mathrm{MK}(q)$ is upper bounded by $$O\left(n-\floor{q}\right).$$
\end{theorem}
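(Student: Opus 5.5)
We will build a certificate $s_1(\x)=\sigma_1(|\x|)$, $s_0(\x)=\sigma_0(|\x|)$ with $\sigma_0(x)=x-\ceil{q}-(x-q)\sigma_1(x)$. Set $m:=n-\floor{q}$, so that the feasible layers $\ceil{q},\dots,n$ are $m$ consecutive integers. After the substitution $x\mapsto n-x$ (equivalently $\x\mapsto\mathbf 1-\x$, which preserves degrees), these layers become $0,1,\dots,m-1$. The infeasible layer $\floor{q}$ becomes $m$, and the pole $q$ becomes $n-q\in(m-1,m)$. The whole construction should therefore live on about $m+1$ points, with degree $O(m)$.

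\textbf{Construction of $\sigma_1$.} First, let $L$ be the Lagrange interpolant of degree at most $m-1$ on the nodes $\ceil{q},\dots,n$ with $L(j)^2=\frac{j-\ceil{q}}{j-q}$. Since these values lie in $[0,1)$, we can take square roots. Then $L^2$ is SOS of degree $2m-2$, and $(x-q)L(x)^2=x-\ceil{q}$ at every feasible layer. Hence $\sigma_0$ vanishes at $\ceil{q},\dots,n$.

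Second, we add a correction $V(x)=c\prod_{j=\ceil{q}}^{n}(x-j)^2$ with $c\ge0$. It is SOS of degree $2m$ and vanishes on the feasible layers. Set $\sigma_1:=L^2+V$. Then $\sigma_1$ is SOS, and $s_1$ has degree $O(m)$.

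Third, we choose $c$ so that $\sigma_0$ has an extra root at a point $r\in[\floor{q},\ceil{q})$. On $[\floor{q},q)$ we have $x-q<0$, so $-(x-q)\sigma_1(x)$ increases with $c$. Meanwhile $x-\ceil{q}<0$. By continuity we can take $r=\floor{q}$, i.e.\ pick $c$ with $\sigma_0(\floor{q})=0$. This requires $(q-\floor{q})\sigma_1(\floor{q})=1$, which is solvable for $c\ge0$ provided $\widehat q\,L(\floor{q})^2\le1$.

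If instead $\widehat q\,L(\floor{q})^2>1$, we take $c=0$. Then $\sigma_0(\floor{q})>0$, while $\sigma_0(x)\to x-\ceil{q}<0$ as $x\to q^-$, so $\sigma_0$ has a root $r\in(\floor{q},q)$. In either case $\sigma_0$ has roots at $r,\ceil{q},\dots,n$.

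\textbf{Certifying $\sigma_0$.} Since $\deg\sigma_0=O(m)$, we factor it as
\[
\sigma_0(x)=-(x-r)\prod_{j=\ceil{q}}^{n}(x-j)\,\tau(x).
\]
After the reflection $y=n-x$, the product $(x-r)\prod_{j=\ceil{q}}^{n}(x-j)$ becomes, up to a sign, $(y-r')\prod_{j=0}^{m-1}(y-j)$ with $r'=n-r\in(m-1,m]$. This is exactly $A_{m,r'}$ from Lemma~\ref{SQF_remover}, which has an SOS certificate of degree $O(m)$.

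It remains to show that the cofactor $\tau$, with the sign fixed appropriately, is nonnegative on $[0,n]$. Lemma~\ref{lifting_to_hypercube} then gives an SOS certificate of degree $O(m)$, and the product of two certificates is again a certificate of degree $O(m)$.

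\textbf{Main obstacle.} The hard step is the sign of $\tau$ on all of $[0,n]$, not just at integers. This amounts to showing that $\sigma_1$ crosses the hyperbola $\frac{x-\ceil{q}}{x-q}$ only at $r,\ceil{q},\dots,n$, i.e.\ $\sigma_0$ has no other real roots in $[0,n]$, or at least changes sign nowhere else.

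The natural approach is a root count. The function $(x-q)\sigma_1(x)-(x-\ceil{q})$ has degree at most $2m+1$, and we know $m+1$ of its roots. I would try to show that every other root is non-real or of even multiplicity. Failing that, I would relax what is needed: nonnegativity of $\sigma_0$ only matters at integer points $0,\dots,\floor{q}$. So I would instead certify $\tau$ modulo the layer constraints, or replace $\tau$ by a positive combination of squares interpolating $\tau$ at those integers. An interpolant at the $\floor{q}+1$ lower layers would have large degree, however, so the real-root-count route is preferable.
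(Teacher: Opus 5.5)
There is a genuine gap, and it sits exactly at the step you flag as the main obstacle. With your correction $V(x)=c\prod_{j=\ceil{q}}^{n}(x-j)^2\ge0$, that step is actually false. Your skeleton is the paper's: interpolate the square root of the hyperbola on the feasible layers, add a correction vanishing there, factor $\sigma_0$ through $A_{m,n-r}$ after reflecting, and lift the cofactor. The trouble is the sign of the correction.

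For small $\widehat q$ you are always in your first case. There $c=\bigl(1/\widehat q-L(\floor{q})^2\bigr)/(m!)^2$, which blows up as $\widehat q\to0$, while $L$ stays bounded (it depends on $q$ only through the node values $f(j)\in[0,1)$). Since $\sigma_1$ has a double root at $\ceil{q}$, you get $\sigma_0'(\ceil{q})=1$. On the other hand, $\sigma_0(\ceil{q}+\tfrac12)\le\tfrac12\bigl(1-V(\ceil{q}+\tfrac12)\bigr)$, which is negative once $\widehat q$ is small. So for every fixed $m\ge2$ and all sufficiently small $\widehat q$, $\sigma_0$ changes sign strictly inside $(\ceil{q},\ceil{q}+1)$. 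Your cofactor $\tau$ then takes both signs there, and Lemma~\ref{lifting_to_hypercube} cannot be applied.

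Concretely, take $n=4$, $q=2.05$ (so $m=2$, with nodes $3,4$).
\begin{itemize}
\item $L(x)=(x-3)/\sqrt{1.95}$ and $c=0.95/0.195\approx4.87$.
\item $\sigma_1(3.5)\approx0.1282+0.3045\approx0.4327$, hence $\sigma_0(3.5)\approx0.5-1.45\cdot0.4327\approx-0.127$.
\item Meanwhile $\sigma_0'(3)=1$ and $\sigma_0'(4)\approx-1.51$.
\end{itemize}
So the quintic $\sigma_0$ has five simple real roots: $2$, $3$, $4$, and two in $(3,4)$. This also rules out your root-count hope. The computation depends only on $x-\ceil{q}$ and $\widehat q$, so it is not a small-$n$ artifact. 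Your fallbacks are not worked out, and you have no $O(m)$ bound for certifying an extra factor $(|\x|-a)(|\x|-b)$ with $a,b$ inside a gap near the top layer.

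The missing idea is to use a signed rather than a squared correction. The paper takes $V(x)=\frac{1}{\widehat q\,(n-\floor{q})!}\prod_{j=\ceil{q}}^{n}(j-x)$.
\begin{itemize}
\item It is not a univariate SOS. On the hypercube, however, it is a positive multiple of $A_{m-1,m-1}(\mathbf 1-\x)$, so it is certifiable in degree $O(m)$ by Lemma~\ref{SQF_remover}.
\item It still satisfies $V(\floor{q})=1/\widehat q$ and is decreasing on $(-\infty,\ceil{q})$.
\item On the gap $(k-1,k)$ its sign is $(-1)^{k-\ceil{q}}$. The Lagrange remainder for the interpolant $M$ of $h(x)=1/\sqrt{(x-\ceil{q})(x-q)}$, whose $d$-th derivative has constant sign $(-1)^d$, gives $\sgn(f-L)=(-1)^{k-\ceil{q}-1}$ there (Lemma~\ref{sign_of_M}).
\end{itemize}
Thus $V<0$ exactly where $L$ undershoots $f$, and $V>0$ where it overshoots. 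Together with $L>0$ on $(\ceil{q},n]$, this gives $L^2+V\ne f^2$ strictly inside every gap (Lemma~\ref{L_relates_to_f}).

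You also need a fact you did not establish: $L$ is non-positive and increasing on $(-\infty,\ceil{q})$ (Lemma~\ref{L_on_first_segment}, proved via the Newton form with derivative divided differences). This makes $f^2-\sigma_1$ strictly increasing on $(-\infty,q)$, which pins down $r$ and excludes roots in $[0,\floor{q})$. With this exact root structure (odd multiplicities at the nodes, with the even parts absorbed into a square), your factorization and the lifting via Lemma~\ref{lifting_to_hypercube} go through.
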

\begin{proof}
    We proceed in two steps.
    \paragraph{Constructing $\sigma_1$.} 
    We impose the following constraints on $\sigma_1$
    \begin{enumerate}
        \item $\sigma_1(x)=\frac{x-\ceil{q}}{x-q}$ for all $x=\ceil{q},\ldots, n$,
        \item $\sigma_1(x)\ge\frac{1}{\widehat{q}}$ for all $0\le x\le \floor{q}$.
    \end{enumerate}
    Consider 
    \[
    f:\R\setminus[q, \ceil{q})\rightarrow\R, \ x\mapsto\sqrt{\frac{x-\ceil{q}}{x-q}}
    \]
    and the interpolating polynomial $L$ of degree at most $d=n-\ceil{q}$ for the $d+1=n-\floor{q}$ points
    \[
    (x_j, y_j)=\left(j, f(j)\right)\text{ for all } j=\ceil{q},\ldots, n,
    \]
    let $V(x):=\frac{1}{\widehat{q}(n-\floor{q})!}\prod_{j=\ceil{q}}^n(j-x)$, and define
    \[
    \sigma_1(x):=L(x)^2+V(x).
    \]
    Notice that both constraints are satisfied by construction of $\sigma_1$. This is clear, since on the one hand $V(x)=0$ for all $x=\ceil{q},\ldots, n$, and on the other hand, $V(\floor{q})=\frac{1}{\widehat{q}}$ and $V$ is decreasing on~${(-\infty, \floor{q}]}$, so
    \[
    L(x)^2+V(x)\ge V(x)\ge V(\floor{q})= \frac{1}{\widehat{q}},
    \]
    for all $0\le x \le \floor{q}$.
    Additionally, 
    \[
    s_1(\x)=L(|\x|)^2+\frac{1}{\widehat{q}(n-\floor{q})!}A_{n-\ceil{q},n-\ceil{q}}(\mathbf1-\x)
    \]
    has an SOS certificate of degree $O\left(n-\floor{q}\right)$, where we used that $A_{n-\ceil{q},n-\ceil{q}}$ has an SOS certificate of degree $O(n-\floor{q})$ by Lemma \ref{SQF_remover}, and $L(|\x|)^2$ is already a square of degree at most $2(n-\ceil{q})$.
    
    \paragraph{Bounding the SOS degree of $s_0$.}
    
    By Lemma \ref{roots_of_sigma0_interpolation}, the roots of $\sigma_0$ are exactly $\ceil{q},\ldots, n$ together with a single root $\floor{q}\le r<\ceil{q}$, all of odd multiplicity. This allows us to write
    \begin{align*}
    &\sigma_0(x)=p(x)(x-r)\prod_{j=\ceil{q}}^n(x-j)^{2\mu_j+1}\\
    &=(-1)^{n-\ceil{q}}p(x)(n-x-(n-r))\prod_{j=0}^{n-\ceil{q}}(n-x-j)\prod_{j=\ceil{q}}^n(x-j)^{2\mu_j}
    \end{align*}
    where $2\mu_j+1$ are the multiplicities of each root $j=\ceil{q},\ldots, n$ and $p$ is a polynomial of degree $O(\deg(\sigma_1))$ that has no additional zeros in $[0,n]$.
    Importantly, note that $(-1)^{n-\ceil{q}}p(x)$ is positive on $[0,n]$ since $\sigma_0(x)>0$ for all $0\le x<\floor{q}$. Indeed,
    \begin{align*}
    1&=\mathrm{sgn}(\sigma_0(x))=\mathrm{sgn}(p(x))\mathrm{sgn}\left((x-r)\prod_{j=\ceil{q}}^n(x-j)\right)\\
    &=\mathrm{sgn}(p(x))(-1)^{n-\ceil{q}}
    \end{align*}
    and $p$ has no roots in $[0,n]$, so it cannot change sign.
    We can thus apply Lemma \ref{lifting_to_hypercube} to conclude that $(-1)^{n-\floor{q}}p$ has an SOS certificate of degree $O(\deg(\sigma_1))$. 
    
    The product $\prod_{j=\ceil{q}}^n(x-j)^{2\mu_j}$ is already SOS of degree at most $O(\deg(\sigma_1))$. Additionally, we observe that
    \[
    s_0(\x)=(-1)^{n-\ceil{q}}p(|\x|)A_{n-\floor{q}, n-r}(\mathbf1-\x)\left(\prod_{j=\ceil{q}}^n(|\x|-j)^{\mu_j}\right)^2.
    \]
    By another application of Lemma \ref{SQF_remover}, the polynomial $A_{n-\floor{q}, n-r}(\mathbf1-\x)$ has an SOS certificate of degree $O(n-\floor{q})$ over the Boolean hypercube.
    Therefore, both $s_0$ and $s_1$ have SOS certificates of degree $O(n-\floor{q})$, which concludes the proof.
\end{proof}

\subsection{The Lower Layers of the Hypercube}\label{lower_layers_ub_section}

We now investigate the lower Hamming layers of the hypercube, corresponding to parameters $0<q<\lfloor n/2\rfloor$. 
We construct an SOS polynomial $\sigma_1$ of small degree that remains in the blue regions illustrated in Figure \ref{fig_Cheb}, i.e., that is relatively large when $0\le x\le \floor{q}$ and stays very small when $\ceil{q}\le x \le n$. 
    
\begin{figure}[t]
  \centering
  \includegraphics[width=0.8\linewidth]{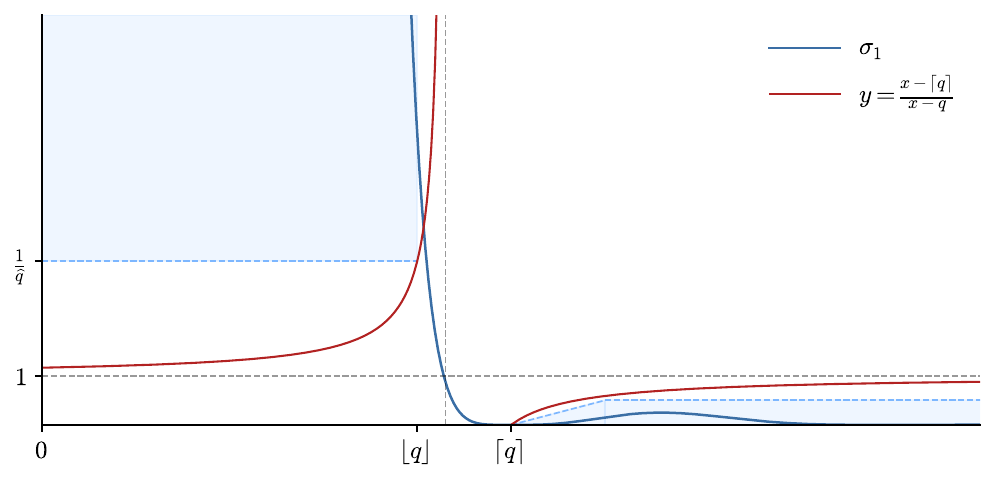}
  \caption{Geometric constraints on the univariate $\sigma_1$ when $q<\floor{n/2}$.}
  \label{fig_Cheb}
\end{figure}

Our construction generalizes the one in \cite{kurpisz2021soscertificationsymmetricquadratic} based on Chebyshev polynomials of the first kind, known for their extremal growth properties. We shift and scale a Chebyshev polynomial of appropriate degree so that its first root is at $\ceil{q}$. The polynomial we obtain will decay very quickly to the left of $\ceil{q}$ and, therefore, crosses the hyperbola somewhere between $\floor{q}$ and $\ceil{q}$.

We will thus obtain that $\sigma_0$ is nonnegative in $[0,n]\cap\Z$ and, by continuity, has only one root between $\floor{q}$ and $\ceil{q}$. We will then call upon a result regarding symmetric quadratic functions, shown by Kurpisz et al. in \cite{kurpisz2021soscertificationsymmetricquadratic}, and lift everything to obtain an SOS certificate for $s_0$ with Lemma \ref{lifting_to_hypercube}.

\begin{theorem}\label{MK-rank}
    Let $0<q<\floor {\frac{n}{2}}$ with $\widehat{q}\ne 0$. 
    Then the SOS rank of $\mathrm{MK}(q)$ is upper bounded by
    \[
    O\!\left(\sqrt{n}\log\!\left(2/\widehat{q}\right) + \sqrt{n\floor{q}}\log n\right).
    \]
\end{theorem}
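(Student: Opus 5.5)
We follow the two-step template of \Cref{upper_layers_ub}: first construct a univariate SOS multiplier $\sigma_1$ of low degree, then show that $\sigma_0(x)=x-\ceil{q}-(x-q)\sigma_1(x)$ admits a low-degree certificate over $\{0,1\}^n$.

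\textbf{Constraints on $\sigma_1$.} We need $\sigma_1(x)\ge \frac{\ceil{q}-x}{q-x}$ for integers $0\le x\le\floor{q}$. This is at most $1/\widehat{q}$ and equals $1/\widehat{q}$ at $x=\floor{q}$. We also need $0\le\sigma_1(x)\le\frac{x-\ceil{q}}{x-q}$ for integers $\ceil{q}\le x\le n$. This upper bound is $0$ at $x=\ceil{q}$ and at least $\frac{1}{2-\widehat{q}}\ge 1/2$ at $x\ge\ceil{q}+1$.

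\textbf{Construction of $\sigma_1$.} We take $\sigma_1=c\,T^2\cdot R^2$, where $T$ is a shifted and scaled Chebyshev polynomial. Concretely, let $T(x)=T_m\bigl(\alpha(n-x)/(n-\ceil{q})\cdot\text{const}\bigr)$, normalised so that $|T|\le 1$ on $[\ceil{q}+1,n]$, that $T$ has a root exactly at $\ceil{q}$ (its extreme root), and that $T$ grows to the left of $\ceil{q}$.

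\begin{itemize}[label={}]
\item \emph{Root spacing.} The extreme root of $T_m$ on an interval of length $n$ sits at distance about $n/m^2$ from the edge. Requiring this to be at most $1$ (unit spacing) forces $m\gtrsim\sqrt{n}$.
\item \emph{Growth at $\floor{q}$.} The Chebyshev growth just outside the interval is roughly $e^{m\sqrt{2\delta}}$, where $\delta\approx 1/n$ is the relative distance. So for $T^2(\floor{q})\ge 1/\widehat{q}$, with enough slack to beat the left-hand side, we take $m=O(\sqrt{n}\log(2/\widehat{q}))$.
\item \emph{Remaining lower layers.} For $x<\floor{q}$ the Chebyshev factor only grows further, since it is monotone outside its oscillation interval. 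Hence the constraints at $0,\dots,\floor{q}-1$ are automatic.
\item \emph{Upper layers.} The danger is the layers $\ceil{q}+1,\dots$, where $\sigma_1$ must stay below $\approx 1/2$ (near $\ceil{q}+1$) while $c$ is large. For this we multiply by an extra damping factor $R^2$, with $R$ another Chebyshev-type polynomial of degree $O(\sqrt{n\floor{q}}\log n)$. $R$ is small (about $1/\mathrm{poly}(n)$) on $[\ceil{q},n]$ and bounded below by a constant on $[0,\floor{q}]$.
\end{itemize}

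Equivalently, following \cite{kurpisz2021soscertificationsymmetricquadratic}, we choose a single Chebyshev polynomial whose oscillation interval is $[\ceil{q},n]$ and whose degree is large enough to give growth $1/\widehat{q}$ at distance $1$ and also dominance over the $\floor{q}+1$ lower points. The $\sqrt{n\floor{q}}\log n$ term is what we expect to pay for handling $\floor{q}$ layers on the left. This is the same cost as in \Cref{lb_MK}, which has $\sqrt{n}$ growth per unit distance.

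\textbf{Certifying $s_0$.} We verify that $\sigma_0\ge 0$ on $\{0,\dots,n\}$ with equality exactly at $\ceil{q}$, and that $\sigma_0$ has a single sign change (a root $r\in[\floor{q},\ceil{q})$) between $\floor{q}$ and $\ceil{q}$. The plan is to factor $\sigma_0(x)=(x-r)(x-\ceil{q})u(x)$ up to further sign-neutral factors. Here $u\ge 0$ on $[0,n]$ except possibly on sub-intervals between consecutive integers where $\sigma_0$ dips negative.

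To handle such dips, we invoke the symmetric quadratic result of Kurpisz et al.: a polynomial $g(|\x|)$ with $g$ nonnegative on integers and of the form (quadratic with roots in an interval of length $\le1$) times (a positive polynomial) has a certificate of degree $O(\deg g)$. Applying it to $(x-r)(x-\ceil{q})$, which is nonnegative on integers, and \Cref{lifting_to_hypercube} to $u$ gives degree $O(\deg\sigma_1)$. Combined with the degree of $\sigma_1$, this yields the claimed bound.

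\textbf{Main obstacle.} The hardest step is proving that $\sigma_0$ has no negative regions in $[0,n]$ other than between $r$ and $\ceil{q}$. Between the roots of the Chebyshev polynomial in the upper layers, $\sigma_0$ could in principle dip negative at non-integer points. We would first try to show directly that $(x-q)\sigma_1(x)<x-\ceil{q}$ on all of $[\ceil{q}+1,n]$ by uniform smallness of $\sigma_1$ there. On $[\ceil{q},\ceil{q}+1]$, the double root of $T^2$ at $\ceil{q}$ together with the local slope estimate should give a single simple root, at $\ceil{q}$ itself.
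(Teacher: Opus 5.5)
Your proof has a genuine gap in the certification of $s_0$. You claim that the result of Kurpisz et al.\ certifies the quadratic $(|\x|-r)(|\x|-\ceil{q})$, with $r\in(\floor{q},\ceil{q})$, in degree $O(\deg g)=O(1)$. That is false, and this quadratic is exactly where the $\sqrt{n\floor{q}}\log n$ term of the theorem comes from.

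To see that the claim fails, suppose $(|\x|-k)(|\x|-r)=s(\x)$ on $\{0,1\}^n$, with $s$ SOS of degree $D$, and pick $\rho\in(r,k)$. We have $(y-k)(y-r)=(\rho-k)(\rho-r)+(y-\rho)(y+\rho-k-r)$, and $c:=(k-\rho)(\rho-r)>0$. Hence $-1=s(\x)/c-(|\x|-\rho)(|\x|+\rho-k-r)/c$ on the hypercube. This is a refutation of $|\x|=\rho$ of degree $O(D)$, so Grigoriev's theorem (as used in \Cref{PROP:GrigorievLB}) forces $D=\Omega(\ceil{q})$ for $k=\ceil{q}\le\floor{n/2}$. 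That is linear in $n$ for $q$ near the top of the theorem's range.

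What Theorem~1 and Corollary~9 of \cite{kurpisz2021soscertificationsymmetricquadratic} actually give is degree $O(\sqrt{n\ceil{q}}\log n)$ for $2\le\ceil{q}\le\floor{n/2}$, and $O(1)$ for $\ceil{q}=1$. The paper writes $\sigma_0=(x-\ceil{q})(x-r)p(x)$ with $p>0$ on all of $[0,n]$. So there are no ``dips'' to absorb, and the quadratic result could not absorb them anyway. It lifts $p$ by \Cref{lifting_to_hypercube} and pays $O(\sqrt{n\ceil{q}}\log n)=O(\sqrt{n\floor{q}}\log n)$ for the quadratic, using $\ceil{q}\le 2\floor{q}$ when $\floor{q}\ge 1$.

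Your final bound only comes out right because you charged the $\sqrt{n\floor{q}}\log n$ term to a damping factor $R$ in $\sigma_1$. That factor is unnecessary. As specified, it is not even available when $\floor{q}=0$: its degree budget is then $0$. But a polynomial that is $1/\mathrm{poly}(n)$ on $[1,n]$ and $\Omega(1)$ at $0$ needs degree $\Omega(\sqrt{n}\log n)$, which already overshoots the target $O(\sqrt{n})$ at $q=1/2$.

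The premise that ``$c$ is large'' is what misleads you: the prefactor should be small, and the growth should come from the degree. The paper takes $\sigma_1=\tau^2$ with $\tau=\frac{1}{8}T_d(\cdot)^m$, where $d=\lceil 3\sqrt{n-\floor{q}}\rceil$, $m=\lceil\frac{1}{2}\log_2(64/\widehat{q})\rceil$, and the smallest root of the shifted $T_d$ sits at $\ceil{q}$. Then:
\begin{itemize}
\item $|T_d|\ge 2$ at $\floor{q}$, so the power $m$ gives $\sigma_1>1/\widehat{q}$ on $[0,\floor{q}]$, using monotonicity as you say.
\item The prefactor gives $\sigma_1\le 1/64$ on $[\ceil{q},n]$.
\item $d^2/(n-\floor{q})=O(1)$ bounds the slope at $\ceil{q}$, which gives $\sigma_1(x)\le(x-\ceil{q})/2$ on $[\ceil{q},\ceil{q}+1]$.
\end{itemize}
This resolves your ``main obstacle'' just as you propose, with $\deg\sigma_1=O(\sqrt{n}\log(2/\widehat{q}))$. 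A single $T_m$ with $m=\Theta(\sqrt{n}\log(2/\widehat{q}))$ would also work. In that case, however, the slope at the root is $\Theta(\log^2(2/\widehat{q}))$, so you need a prefactor $O(1/\log^2(2/\widehat{q}))$, not a large $c$.
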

\begin{proof}
    We proceed in two steps.
    \paragraph{Constructing $\sigma_1$.}
    We impose the following constraints on $\sigma_1$
    \begin{enumerate}
        \item $\sigma_1(x)>\frac{1}{\widehat{q}}$ for all $0\le x\le \floor{q}$,
        \item $\sigma_1(x)\le \frac{x-\ceil{q}}{2}$ for all $\ceil{q}\le x \le \ceil{q}+1$,
        \item $\sigma_1(x)\le \frac{1}{2}$ for all $\ceil{q}+1\le x\le n$.
    \end{enumerate}
    
    Let $T_d$ be the Chebyshev polynomial of the first kind. Denote by $r_0$ the smallest root of $T_d\left(\frac{x}{n-\floor{q}}-1\right)$ and define
    \[
    \tau(x):=\frac{1}{8}T_d\left(\frac{x-\ceil{q}+r_0}{n-\floor{q}}-1\right)^m \text{ and }
    \sigma_1(x):=\tau(x)^2
    \]
    for integers $d=\lceil 3\sqrt{n-\floor{q}}\rceil$ and $m=\lceil\frac{1}{2}\log_2(\frac{64}{\widehat{q}})\rceil$. Then $\sigma_1$ satisfies conditions \emph{(1), (2),} and \emph{(3)} by Lemma \ref{choosing constants}.
    We therefore obtain $\deg(\sigma_1)= O(\sqrt{n-\floor{q}}\log(2/\widehat{q}))$. Additionally, $\sigma_1$ is already an SOS polynomial since it is a square.
    
    \paragraph{Bounding the SOS Degree of $s_0$.}

    From the previous observations, $\sigma_0(x)>0$ for all $x\in[0,\floor{q}]\cup(\ceil{q},n]$ with $\sigma_0(\ceil{q})=0$. By Lemma \ref{roots_of_sigma0_Chebyshev}, $\sigma_0$ has exactly two roots, namely $\ceil{q}$ and $r\in(\floor{q},q)$, both of which are simple. Thus, we may write 
    \[
    \sigma_0(x)=(x-\ceil{q})(x-r)p(x)
    \]
    for a polynomial $p$ with $\deg(p)= O(\deg(\sigma_1))$ positive on $[0,n]$. 
    By Lemma \ref{lifting_to_hypercube}, $p(|\x|)$ has an SOS certificate of degree $O(\deg(p))$. We may use two results from \cite{kurpisz2021soscertificationsymmetricquadratic}, Theorem 1 on symmetric quadratic functions and Corollary 9, to conclude that the polynomial $(|\x|-\ceil{q})(|\x|-r)$ admits an SOS certificate of degree at most $O(\sqrt{n\ceil{q}}\log(n))$ for $2\le \ceil{q}\le \lfloor n/2\rfloor$ and $O(1)$ for $\ceil{q}=1$. 
    Thus, the polynomial $s_0(\x)=\sigma_0(|\x|)$ admits a certificate of degree at most $O\left(\deg(\sigma_1) +\sqrt{n\floor{q}}\log(n)\right)$. This establishes the final bound.
\end{proof}

\subsection{Limits of Integrality Sensitive Hardness}\label{smoothed_section}

At a high level, the pointwise bounds derived above show that the SOS rank of $\MK(q)$ depends on the fractional part of $q$ when $q<\floor {n/2}$. 
In particular, the rank is $\Theta(n)$ only when $q$ lies in a very narrow region close to the lower endpoint $\floor{q}$, and remains comparatively small elsewhere. 
We quantify this effect by expressing an upper bound for the SOS rank under a perturbation of $q$ by $\sigma=\sigma(n)=o(1)$.

\begin{theorem}\label{thm:smoothed-mk}
Let $0<q<\floor{n/2}$ with $\widehat{q}\ne 0$.
Let $\eta\sim\mathcal N(0,\sigma^2)$, where $\sigma=\sigma(n)\in(0,1)$ satisfies $\sigma=o(1)$.
Then, under perturbation of $q$ by $\eta$,
\[
\E_{\eta} \!\left[\SOSrk\!\bigl(\MK( \eta+q)\bigr)\right]
= O\!\Bigl(
\sqrt{n}\bigl(
\sqrt{q}\,\log n
+ \log\!\bigl(1/\sigma\bigr)
\bigr)
\Bigr).
\]
\end{theorem}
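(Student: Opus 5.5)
We bound the rank of $\MK(q')$ with $q' = q+\eta$ pointwise and then integrate against the Gaussian density. On a high-probability event $q'$ stays in the lower-layer regime, where \Cref{MK-rank} applies; off that event we use the trivial bound $\SOSrk \le n$.

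\textbf{Step 1 (typical regime).} Whenever $0<q'<\floor{n/2}$ and $\widehat{q'}\ne 0$, \Cref{MK-rank} gives
\[
\SOSrk(\MK(q')) \le C\bigl(\sqrt{n}\log(2/\widehat{q'}) + \sqrt{n\floor{q'}}\log n\bigr).
\]
Since $|\eta|\le 1$ except with probability $e^{-\Omega(1/\sigma^2)}$, we have $\floor{q'}\le q+1$ on that event. This gives a contribution of $O(\sqrt{n(q+1)}\log n)$. The summand $\sqrt{n}\log n$ is absorbed: if $q\ge 1$ it is dominated by $\sqrt{nq}\log n$, and if $q<1$ we note $\floor{q'}\in\{0,1\}$ and the $\ceil{q'}=1$ case of \Cref{MK-rank} contributes only $O(1)$ for the quadratic part. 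The remaining work is to bound $\E[\log(2/\widehat{q'})]$.

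\textbf{Step 2 (main obstacle: the fractional part).} The density of $q'$ is at most $1/(\sigma\sqrt{2\pi})$. Hence the density of $\widehat{q'}$ on $(0,1)$ is at most $\sum_{k}\phi_\sigma(k+t-q)$. This sum is at most $1+O(1/\sigma)$ uniformly in $t$, because at most $O(1)$ lattice terms are non-negligible, each bounded by $1/(\sigma\sqrt{2\pi})$.

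A bound of $O(1/\sigma)$ on the density alone would give
\[
\E[\log(2/\widehat{q'})] \le \int_0^1 \min\{\text{dens}, \ldots\}\log(2/t)\,dt,
\]
and bounding the density by $O(1/\sigma)$ yields only $O(1/\sigma)$, which is too weak. Instead, split on the event $\widehat{q'}<\delta$. Since $\log(2/t)$ is decreasing, the extremal allocation of mass places it near $0$. With mass at most $\min\{1,\, c\delta/\sigma\}$ in $[0,\delta]$, we get
\[
\E[\log(2/\widehat{q'})] \le \log(2/\delta) + \int_0^\delta \frac{c}{\sigma}\log(2/t)\,dt .
\]
The integral is $(c\delta/\sigma)(1+\log(2/\delta))$. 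Choosing $\delta=\sigma$ makes it $O(\log(1/\sigma))$, so
\[
\E[\log(2/\widehat{q'})] = O(\log(1/\sigma)).
\]
Multiplied by $\sqrt{n}$, this gives the $\sqrt{n}\log(1/\sigma)$ term. Atoms are impossible, so $\widehat{q'}\neq 0$ almost surely.

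\textbf{Step 3 (atypical events).} We must also handle $q'\le 0$ (rank $0$), and $q'\ge\floor{n/2}$ or $|\eta|>1$. When $q\ge 1$, these events require $|\eta|\ge \Omega(1)$ and have probability $e^{-\Omega(1/\sigma^2)}$. Times $n$, this is $o(1)$ only if $\sigma\le c/\sqrt{\log n}$. Otherwise we use $1/\sigma^2 \gtrsim$ const together with the slack of the $\sqrt{n}$ factor. To make this rigorous, in the exceptional region $\floor{n/2}\le q'<n$ we would invoke \Cref{upper_layers_ub} or the bound $n$. Near $q'\approx \floor{n/2}$, where $q$ is close to $n/2$, \Cref{MK-rank} itself gives $O(\sqrt{nq}\log n)\ge n$, so $n$ is already within the target bound.

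I expect Step 2 and this tail bookkeeping to be where the care is needed. The tail bound in particular is delicate for $q$ near $\floor{n/2}$, and there I would simply note that the target bound exceeds $n$.
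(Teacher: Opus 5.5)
Your Step 2 is the same estimate the paper uses for $I_k$. You split at $u=\sigma$: the $O(1/\sigma)$ density times $\int_0^\sigma \log(2/u)\,du$ contributes $O(1+\log(1/\sigma))$, and the remaining mass contributes at most $\log(2/\sigma)$. Routing this through the periodized density of $\widehat{q'}$ is a tidier version of the paper's interval-by-interval sum, which splits into $k\in\{\lfloor q\rfloor,\lceil q\rceil,\lceil q\rceil+1\}$ versus the rest.

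\textbf{The gap is in Steps 1 and 3.} You truncate at $|\eta|\le 1$ and charge the trivial bound $n$ on the complement. This costs about $n\,e^{-1/(2\sigma^2)}$, whereas for bounded $q\ge 1$ the target is $O(\sqrt n(\log n+\log(1/\sigma)))$. That forces $\sigma=O(1/\sqrt{\log n})$, which the hypothesis $\sigma=o(1)$ does not provide: for $\sigma=1/\log\log n$ the charge is $n\,e^{-(\log\log n)^2/2}=n^{1-o(1)}$. The ``slack of the $\sqrt n$ factor'' is already used up in this comparison and only changes the constant in the threshold, so it cannot rescue Step 3.

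The paper never pays $n$ for a constant-size deviation. It applies the pointwise bound of \Cref{MK-rank} on every unit interval $(k-1,k)$ and lets the weight $\exp(-(|k-\lceil q\rceil|-1)^2/(2\sigma^2))$ suppress the far intervals. Its last line coarsens this to $e^{-1/(2\sigma^2)}$ per $k$ summed over $n$ values of $k$, which would reintroduce your threshold; it is the decaying form that makes the tail harmless for every $\sigma=o(1)$.

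In your setup the repair is even simpler. Since $\sqrt{n\lfloor q'\rfloor}\log n\ge n$ once $q'\ge\lfloor n/2\rfloor$ (for large $n$), the expression $\sqrt n\log(2/\widehat{q'})+\sqrt{n\lfloor q'\rfloor}\log n$ bounds the rank up to a constant for every non-integral $q'\in(0,n)$. So no good event is needed. Your Step 2 already controls the first term globally, and $\E\bigl[\sqrt{(q+\eta)_+}\bigr]\le\sqrt q+\E\sqrt{|\eta|}=\sqrt q+O(\sqrt\sigma)$ controls the second when $q\ge 1$.

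Separately, for $q<1$ your claim that the quadratic part costs $O(1)$ covers only $\lceil q'\rceil=1$. On $q'\in(1,2)$ the bound of \Cref{MK-rank} contains $\sqrt n\log n$. Absorbing this into $\sqrt{nq}\log n+\sqrt n\log(1/\sigma)$ for tiny $q$ needs a quantitative condition on $\sigma$. The paper's $k\in C$ terms are equally loose here, since they produce $\sqrt{n(q+2)}\log n$. So this is a weakness of the stated bound for $q<1$ rather than of your route.
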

\begin{proof}
    The proof is a sequence of routine estimates. We use Theorem \ref{MK-rank} to compute
    \begin{align*}
      \E_{\eta} &\!\left[\SOSrk\!\bigl(\MK( \eta+q)\bigr)\right]\\
      & = \frac{1}{\sigma\sqrt{2\pi}}\int_{\R}
         \exp\!\left(-\frac{x^2}{2\sigma^2}\right)
         \SOSrk\!\bigl(\MK(x+q)\bigr)\,dx\\
      & = \frac{1}{\sigma\sqrt{2\pi}}
         \sum_{k\in[n]}\int_{k-1-q}^{k-q}
         \exp\!\left(-\frac{x^2}{2\sigma^2}\right)
         \SOSrk\!\bigl(\MK(x+q)\bigr)\,dx\\
      & = O\!\left(\frac{\sqrt{n}}{\sigma}
         \Biggl[
           \sum_{k\in[n]}
             \bigl(I_k + J_k \sqrt{k}\log n\bigr)
         \Biggr]\right),
    \end{align*}
    where, for every $k\in[n]$, we define
    \begin{align*}
        I_k &:= \int_{k-1-q}^{k-q}\exp\left(-\frac{x^2}{2\sigma^2}\right)\log\left(\frac{1}{x+q-k+1}\right)dx, \\
        J_k &:= \int_{k-1-q}^{k-q}\exp\left(-\frac{x^2}{2\sigma^2}\right)dx.
    \end{align*}
    Let us consider two cases: $k\in C:=\{\floor{q},\ceil{q},\ceil{q}+1\}$ and $k\in [n]\setminus C$.
    \paragraph{Bounding $I_j$.}

    Set $u=q+x+1-k$, then
    \begin{align*}
        I_k = \int_0^1\phi_{q-k+1, \sigma}(u)\log\left(\frac{1}{u}\right)du,
    \end{align*}
    where $\phi_{q-k+1, \sigma}(u)=\exp\left(-\frac{(u-q+k-1)^2}{2\sigma^2}\right)$. \\
    For $k\in C$, we find
        \begin{align*}
            I_k&=\int_0^\sigma\phi_{q-k+1, \sigma}(u)\log\left(\frac{1}{u}\right)du+\int_\sigma^1\phi_{q-k+1, \sigma}(u)\log\left(\frac{1}{u}\right)du\\
            & \le \int_0^\sigma\log\left(\frac{1}{u}\right)du+\log\left(\frac{1}{\sigma}\right)\int_\sigma^1\phi_{q-k+1, \sigma}(u)du\\
            & \le \sigma\left(1+\log\left(\frac{1}{\sigma}\right)\right)+\sigma\log\left(\frac{1}{\sigma}\right)\sqrt{2\pi}\\
            &=O\left(\sigma\left(1+\log\left(\frac{1}{\sigma}\right)\right)\right),
        \end{align*}
        where we used standard bounds in each step.
        
    For the second case, $k\in [n]\setminus C$, we have $|\ceil{q}-k|\ge 2$. We use the following bound
    \begin{align*}
        I_k &= \int_0^1\phi_{q-k+1,\sigma}(u)\log\left(\frac{1}{u}\right)du\\
        &\le \exp\left(-\frac{(|\ceil{q}-k|-1)^2}{2\sigma^2}\right)\int_0^1\log\left(\frac{1}{u}\right)du \le \exp\left(-\frac{1}{2\sigma^2}\right)
    \end{align*}
    which holds since $u\in[0,1],$ so $u+\ceil{q}-1-q\in [\ceil{q}-1-q, \ceil{q}-q]\subseteq[-1,1]$, and $|u+k-1-q|\ge|k-\ceil{q}|-|u+\ceil{q}-1-q|\ge |k-\ceil{q}|-1$.
    
    \paragraph{Bounding $J_j$.}
    We treat two cases. First, assume $|k-\ceil{q}|\ge 2$ and let $u=x+q-k+1$. Just as before, we find
    \begin{align*}
        J_k &= \int_{0}^1\phi_{q-k+1,\sigma}(u)du\\
        &\le \exp\left(-\frac{(|k-\ceil{q}|-1)^2}{2\sigma^2}\right) \le \exp\left(-\frac{1}{2\sigma^2}\right).
    \end{align*}
    For $k\in C$, we use the following bound instead
    \[
    J_k\le\int_\R\exp\left(-\frac{x^2}{2\sigma^2}\right)dx=\sigma\sqrt{2\pi}.
    \]

    \paragraph{Combining the bounds.}
    The only non-negligible contributions come from $I_k$ and $J_k$ when $k\in C$. Plugging these into the formula we found in the beginning yields exactly the desired bound.
\end{proof}
Now, for $q$ sufficiently small and $1/\sigma$ polynomial in $n$, the bound becomes $O(\sqrt{nq}\log(n))$, which is $o(n)$.

\section{Technical Lemmas}\label{technical_section}
    In the proofs of Theorems \ref{upper_layers_ub} and \ref{MK-rank}, we left some technical details unproven. We complete the aforementioned proofs in this section.
\subsection{Technical Lemmas for Section \ref{upper_layers_ub_section}}\label{technical_section_upper_layers}
    The only technical result we left open in Section \ref{upper_layers_ub_section} was Lemma \ref{roots_of_sigma0_interpolation}. To show it, we will need a couple of preliminaries.

    In the special case where $\ceil{q}=n$, $L=0$, and everything works out as intended, which can be easily checked. Thus, we restrict our attention to the case $\ceil{q}<n$.
    First, note that ${L(\ceil{q})=0}$. Thus, we may factor $L(x)=(x-\ceil{q})M(x),$ where $M$ is the minimum degree interpolating polynomial for the function
        \[
        h:(\ceil{q},n]\rightarrow\R, x \mapsto \frac{1}{\sqrt{(x-\ceil{q})(x-q)}}
        \]
    at the nodes $\ceil{q}+1,\ \ldots,\ n$. Proving things about $M$ will be more convenient since the function~$h$ is well behaved in $[\ceil{q}+1,n]$, as opposed to $f$, which is not differentiable in $x=\ceil{q}$.

    Since $L$ is interpolating the square root of the desired function, it will be beneficial to show that the resulting polynomial remains positive on $(\ceil{q},n]$ and non-positive on $[0,\ceil{q})$. This way, we can safely square everything to obtain the desired statements.
    \begin{lemma} \label{L_on_first_segment}
        The function $L:\R\rightarrow\R$ is non-positive, increasing on $(-\infty, \ceil{q})$, and positive on~$(\ceil{q},n]$.
    \end{lemma}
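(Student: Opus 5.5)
The plan is to work with the factorization $L(x)=(x-\ceil{q})M(x)$, where $M$ is the interpolant of $h(x)=((x-\ceil{q})(x-q))^{-1/2}$ at the $m:=n-\ceil{q}$ nodes $\ceil{q}+1,\ldots,n$, so $\deg M\le m-1$. Everything then reduces to sign and monotonicity properties of $M$, and these come from one structural fact: $h$ is \emph{completely monotone} on $(\ceil{q},\infty)$. Indeed, $(x-\ceil{q})^{-1/2}$ and $(x-q)^{-1/2}$ are completely monotone there, and products of completely monotone functions are completely monotone. Hence $(-1)^k h^{(k)}>0$ on $(\ceil{q},\infty)$ for every $k$. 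By the mean value theorem for divided differences, $h[\ceil{q}+1,\ldots,\ceil{q}+1+k]=h^{(k)}(\xi)/k!$, which has sign $(-1)^k$.

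\emph{Step 1: $M$ on $(-\infty,\ceil{q}+1]$.} Write $M$ in Newton form with the nodes in increasing order:
\[
M(x)=\sum_{k=0}^{m-1} h[\ceil{q}+1,\ldots,\ceil{q}+1+k]\prod_{j=1}^{k}(x-\ceil{q}-j).
\]
For $x\le \ceil{q}+1$, the product has sign $(-1)^k$, and its absolute value $\prod_{j=1}^k(\ceil{q}+j-x)$ is nonincreasing in $x$. So every term is nonnegative and nonincreasing, and the $k=0$ term $h(\ceil{q}+1)$ is strictly positive. Hence $M>0$ and $M'\le 0$ on $(-\infty,\ceil{q}+1]$.

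\emph{Step 2: the claims on $(-\infty,\ceil{q})$ and at the first node.} For $x<\ceil{q}$ we have $L(x)=(x-\ceil{q})M(x)<0$. Moreover,
\[
L'(x)=M(x)+(x-\ceil{q})M'(x)\ge M(x)>0,
\]
because $x-\ceil{q}<0$ and $M'(x)\le 0$. So $L$ is nonpositive and increasing on $(-\infty,\ceil{q})$, with $L(\ceil{q})=0$. Step 1 also gives $L>0$ on $(\ceil{q},\ceil{q}+1]$.

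\emph{Step 3 (main obstacle): positivity of $M$ on $[\ceil{q}+1,n]$.} The pointwise error formula does not settle this by itself. It does show, via Rolle and $h^{(m)}\ne 0$, that $h-M$ vanishes on $(\ceil{q},\infty)$ exactly at the nodes, so its sign alternates between consecutive nodes. But on the gaps where $M<h$, this alone gives no lower bound on $M$. The remedy I would try first is Bernstein's theorem: write
\[
h(x)=\int_0^\infty e^{-t(x-\ceil{q})}\,d\mu(t)
\]
with $\mu\ge 0$. Interpolation is linear, so $M=\int P_t\,d\mu(t)$, where $P_t$ interpolates $e^{-t(x-\ceil{q})}$ at the nodes. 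Setting $y=e^{-t}\in(0,1)$ and $x=\ceil{q}+1+s$, Newton's forward formula gives
\[
P_t(x)=y\sum_{i=0}^{m-1}\binom{s}{i}(y-1)^i,
\]
a truncated binomial series for $y^{1+s}$. It therefore suffices to show that this truncation is positive for $s\in[0,m-1]$ and $y\in(0,1)$.

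To prove that, I would compare the truncation with the full function $y^{s}$. The Lagrange remainder of the binomial expansion equals $\binom{s}{m}(y-1)^m\,\theta^{s-m}$ for some $\theta\in(y,1)$. Because the nodes are consecutive integers, $|\binom{s}{m}|$ is small for $s\in[0,m-1]$, so the remainder should be dominated by $y^{s}$. If this uniform estimate turns out to be too delicate, a fallback is a root-counting argument. $M$ has degree at most $m-1$, is positive at all $m$ nodes, and has its leading-coefficient sign $(-1)^{m-1}$ fixed by Step 1. Combining this with the alternating sign pattern of $h-M$, one would rule out a pair of roots in any gap $(j,j+1)$.
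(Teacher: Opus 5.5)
Your Steps 1 and 2 are correct and are essentially the paper's argument: the Newton form with divided differences of sign $(-1)^k$, followed by $L'=M+(x-\ceil{q})M'\ge M>0$.

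\textbf{The gap is in Step 3.} The sufficient condition you reduce to is false. As $y\to 0^+$ the truncated series tends to $\sum_{i=0}^{m-1}(-1)^i\binom{s}{i}=(-1)^{m-1}\binom{s-1}{m-1}$. At $s=3/2$, which is admissible once $m\ge 3$, this equals $(-1)^{m-1}\binom{1/2}{m-1}<0$; for $m=3$ it is $1-\frac32+\frac38=-\frac18$. Hence $P_t(\ceil{q}+5/2)<0$ for all large $t$. Your remainder comparison breaks down at exactly this point: as $y\to 0$ the target $y^s$ tends to $0$, while $\bigl|\binom{s}{m}\bigr|(1-y)^m\theta^{s-m}\ge\bigl|\binom{s}{m}\bigr|(1-y)^m$ does not. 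Pointwise positivity in $t$ is therefore unavailable. You would need a quantitative cancellation argument against the specific Bernstein measure of $h$, and the proposal does not supply one.

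The fallback fails for a structural reason. Every property it invokes also holds for the interpolant of the completely monotone function $e^{-t(x-\ceil{q})}$ with $t$ large:
\begin{itemize}
\item positivity at the nodes and on $(-\infty,\ceil{q}+1]$;
\item degree $m-1$ with leading coefficient of sign $(-1)^{m-1}$;
\item alternating sign of the interpolation error.
\end{itemize}
That interpolant is negative at $\ceil{q}+5/2$, as just shown. So positivity of $M$ cannot follow from complete monotonicity of $h$ alone.

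\textbf{The missing idea} is to decompose $h$ into Cauchy kernels rather than exponentials. With $c=\ceil{q}-q$ one has
\[
h(x)=\frac{1}{\pi}\int_0^{c}\frac{1}{x-\ceil{q}+s}\,\frac{ds}{\sqrt{(c-s)s}},
\]
so $M$ is a positive mixture of the interpolants $m_s$ of $1/(x-\ceil{q}+s)$ at the nodes. These have the closed form
\[
m_s(x)=\frac{W(\ceil{q}-s)-W(x)}{W(\ceil{q}-s)\,(x-\ceil{q}+s)},\qquad W(x)=\prod_{j=\ceil{q}+1}^n(x-j).
\]
Every pole $\ceil{q}-s$ lies at distance at least $1$ to the left of the first node. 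Hence $|W(\ceil{q}-s)|\ge(n-\ceil{q})!\ge|W(x)|$ on $[\ceil{q}+1,n]$, which gives $m_s\ge 0$ there and therefore $M>0$.

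This is how the paper proves positivity on $(\ceil{q},n]$. The underlying point is that a Cauchy kernel whose pole sits at least one node spacing from the nodes has a nonnegative interpolant, whereas a steep exponential does not. That is why the Bernstein decomposition is the wrong tool here.
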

    \begin{proof}
        We show that $M$ is non-increasing on $(-\infty, \ceil{q}+1)$. Then
        \[
        L'(x)=(x-\ceil{q})M'(x)+M(x)\ge M(\ceil{q}+1)=\frac{1}{\sqrt{\ceil{q}+1-q}} >0
        \]
        for any $x\in(-\infty, \ceil{q})$, since $(x-\ceil{q})M'(x)\ge 0$ and $M(x)\ge M(\ceil{q}+1)$. In particular, $L(x)\le L(\ceil{q})=0$ for all $x\in(-\infty, \ceil{q})$.

        To show that $M$ is indeed non-increasing on $(-\infty, \ceil{q}+1)$, write $M$ in Newton form using the derivative form for the divided differences
        \[
        M(x)=h(\ceil{q}+1)+\sum_{r=\ceil{q}+2}^n\frac{h^{(r-\ceil{q}-1)}(\xi_r)}{(r-\ceil{q}-1)!}\prod_{j=\ceil{q}+1}^{r-1}(x-j),
        \]
        for some $\xi_r\in(\ceil{q}+1, r)$. So that
        \[
        M'(x)=\sum_{r=\ceil{q}+2}^n\frac{h^{(r-\ceil{q}-1)}(\xi_r)}{(r-\ceil{q}-1)!}\sum_{i=\ceil{q}+1}^{r-1}\prod_{j=\ceil{q}+1, j\ne i}^{r-1}(x-j),
        \]
        and, in particular, for $r\ge \ceil{q}+2$,
        \begin{align*}
        \sgn\left(\frac{h^{(r-\ceil{q}-1)}(\xi_r)}{(r-\ceil{q}-1)!}\sum_{i=\ceil{q}+1}^{r-1}\prod_{j=\ceil{q}+1, j\ne i}^{r-1}(x-j)\right)&=(-1)^{r-\ceil{q}-1+r-\ceil{q}}\\
        &=-1.
        \end{align*}
        Thereby showing $M'(x)\le 0$ for all $x\in(-\infty, \ceil{q}+1).$

        Finally, to conclude that $L$ is positive in $(\ceil{q},n]$, it suffices to argue that $M$ is positive in $(\ceil{q},n]$. Since $M$ is non-increasing on $(-\infty,\ceil{q}+1]$ and $M(\ceil{q}+1)>0$, it is enough to check the interval $(\ceil{q}+1,n]$.
        For this, observe that on $[\ceil{q}+1,n]$,
        \[
        M(x)=\frac{1}{\pi}\int_0^{\ceil{q}-q}\frac{m_s(x)}{\sqrt{(\ceil{q}-q-s)s}}ds,
        \]
        where $m_s$ is the minimum degree interpolating polynomial for $g_s:x\mapsto \frac{1}{x-\ceil{q}+s}$ at the nodes $\ceil{q}+1, \ldots, n$. The integral identity can be verified by simple computation at the relevant nodes. Since the quantities on both sides are polynomials in $x$, and the degree of the RHS is at most $n-\ceil{q}-1$, we obtain equality by uniqueness of interpolation.

        It is therefore sufficient to show that $m_s(x)\ge 0$ for all $s\in(0,\ceil{q}-q)$ and $x\in[\ceil{q}+1,n]$. It is straightforward to verify that the following is an explicit form for $m_s$ by uniqueness of interpolation
        \[
        m_s(x)=\frac{W(\ceil{q}-s)-W(x)}{W(\ceil{q}-s)(x-\ceil{q}+s)},
        \]
        where $W(x)=\prod_{j=\ceil{q}+1}^n(x-j)$. Moreover, for $k=\ceil{q}+2,\ldots, n$ and $x\in(k-1,k)$, we find
        \begin{align*}
            |W(x)|&=\prod_{j=\ceil{q}+1}^{k-1}|x-j|\prod_{j=k}^{n}|x-j|\\
            &\le \prod_{j=\ceil{q}+1}^{k-1}|k-j|\prod_{j=k}^{n}|j-k+1|\\
            &=(k-\ceil{q}-1)!(n-k+1)!\\
            &\le (n-\ceil{q})!\\
            &\le \prod_{j=\ceil{q}+1}^n|j-\ceil{q}+s|=|W(\ceil{q}-s)|.
        \end{align*}
        In particular,
        \begin{align*}
            m_s(x)=\frac{1}{x-\ceil{q}+s}\left(1-\frac{W(x)}{W(\ceil{q}-s)}\right)\ge0,
        \end{align*}
        for all $x\in[\ceil{q}+1,n]$. This concludes the proof of the lemma.
    \end{proof}

    The next two lemmas show that the polynomial $M$ alternates between over- and under-estimating the target function on successive subintervals and that this behavior carries over to $\sigma_1$. That is, as one moves across the interpolation nodes, $\sigma_1$ lies alternately above and below the curve being interpolated.
    
    \begin{lemma}\label{sign_of_M}
        Let $k=\ceil{q}+1,\ \ldots,\ n$. The function $h-M$ does not change sign on $(k-1,k)$. In particular, for all $x\in(k-1,k)$,
        \[
        \sgn(f(x)-L(x))=(-1)^{k-\ceil{q}-1}=-\sgn\left(V(x)\right).
        \]
    \end{lemma}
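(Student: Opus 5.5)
The plan is to use the Cauchy remainder formula for polynomial interpolation. $M$ interpolates $h$ at the nodes $\ceil{q}+1,\dots,n$, and $h$ is smooth on $(\ceil{q},\infty)$. Hence for every $x\in(\ceil{q},n]$ there is some $\xi_x\in(\ceil{q},n)$ with
\[
h(x)-M(x)=\frac{h^{(n-\ceil{q})}(\xi_x)}{(n-\ceil{q})!}\,W(x),\qquad W(x)=\prod_{j=\ceil{q}+1}^n(x-j).
\]
(For $k=\ceil{q}+1$ the interval $(\ceil{q},\ceil{q}+1)$ is handled by the same formula, with $\xi_x$ in the convex hull of $x$ and the nodes.)

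\textbf{Step 1: sign of the derivatives of $h$.} Write $h(x)=(x-\ceil{q})^{-1/2}(x-q)^{-1/2}$. Each factor $(x-a)^{-1/2}$ is completely monotone for $x>a$, meaning its $\ell$-th derivative has sign $(-1)^\ell$. A product of completely monotone functions is completely monotone (Leibniz rule). So $\sgn h^{(\ell)}=(-1)^\ell$ on $(\ceil{q},\infty)$. Alternatively, one can use the integral representation from Lemma~\ref{L_on_first_segment}, which writes $h$ as a positive mixture of $1/(x-\ceil{q}+s)$; this gives the same conclusion directly.

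\textbf{Step 2: sign of the error.} For $x\in(k-1,k)$, the factor $W(x)$ has exactly $n-k+1$ negative factors, so $\sgn W(x)=(-1)^{n-k+1}$. Combining this with Step 1 gives
\[
\sgn(h-M)(x)=(-1)^{n-\ceil{q}}(-1)^{n-k+1}=(-1)^{k-\ceil{q}-1}.
\]
This sign is constant on $(k-1,k)$, which is the first claim.

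\textbf{Step 3: transfer to $f-L$.} On $(\ceil{q},n]$ we have $f(x)=(x-\ceil{q})h(x)$ and $L(x)=(x-\ceil{q})M(x)$. Therefore
\[
f-L=(x-\ceil{q})(h-M),
\]
and the prefactor $x-\ceil{q}$ is positive there. So $f-L$ has the same sign as $h-M$.

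\textbf{Step 4: sign of $V$.} The polynomial $V$ has a positive leading constant times $\prod_{j=\ceil{q}}^n(j-x)$. On $(k-1,k)$ the factors $j-x$ are negative exactly for $j\le k-1$, which gives $k-\ceil{q}$ negative factors. Hence $\sgn V(x)=(-1)^{k-\ceil{q}}=-(-1)^{k-\ceil{q}-1}$, as claimed.

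\textbf{Main obstacle.} The only delicate point is Step 1, namely establishing the alternating-sign property of $h^{(\ell)}$. I will argue it by complete monotonicity, falling back on the integral mixture of $1/(x-\ceil{q}+s)$ if needed. The rest is sign counting.
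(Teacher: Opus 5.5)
Your proposal is correct and follows the paper's proof. Both arguments apply the interpolation remainder formula to $h-M$ at the nodes $\lceil q\rceil+1,\dots,n$, combine the sign $(-1)^{n-\lceil q\rceil}$ of $h^{(n-\lceil q\rceil)}$ with the sign $(-1)^{n-k+1}$ of the nodal polynomial on $(k-1,k)$, and then pass to $f-L=(x-\lceil q\rceil)(h-M)$ and count the negative factors of $V$ (your complete-monotonicity/Leibniz argument justifies the alternating derivative sign that the paper only asserts).
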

    \begin{proof}
        Observe that $h$ is $d$ times continuously differentiable on $(\ceil{q},n]$ with $\sgn(h^{(d)}(\xi))=(-1)^d$ for all $\xi\in(\ceil{q},n]$. By the Lagrange remainder formula, for $k=\ceil{q}+1, \ \ldots, \ n$ and any $x\in(k-1,k)$, we obtain a $\xi\in(\ceil{q},n]$ such that
        \begin{align*}
        \sgn(h(x)-M(x))&=\sgn\left(\frac{h^{(d)}(\xi)}{d!}\prod_{j=\ceil{q}+1}^{n}(x-j)\right)\\
        &=(-1)^d(-1)^{d-\ceil{q}-1+k}\\
        &=(-1)^{k-\ceil{q}-1}.
        \end{align*}
        The second part of the statement follows immediately. 
    \end{proof}
    \begin{lemma}\label{L_relates_to_f}
        Let $k=\ceil{q}+1,\ldots, n$. Then, for all $x\in(k-1,k)$,
        \begin{align*}
        L(x)^2+V(x)>f(x)^2 \text{ or }f(x)^2>L(x)^2+V(x).
        \end{align*}
    \end{lemma}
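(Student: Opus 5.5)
The plan is to pin down the sign of $f(x)^2-L(x)^2-V(x)$ on each open interval $(k-1,k)$, for $k=\ceil{q}+1,\ldots,n$, and show it is never zero there. Which of the two strict inequalities holds will be dictated by the parity of $k-\ceil{q}$.

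\textbf{Step 1: reduce to $f-L$.} Fix $k$ and $x\in(k-1,k)$, so $x>\ceil{q}$. By Lemma~\ref{L_on_first_segment}, $L(x)>0$. Also $f(x)>0$, since $x-\ceil{q}>0$ and $x-q>0$. Hence $f(x)+L(x)>0$, and
\[
f(x)^2-L(x)^2=\bigl(f(x)-L(x)\bigr)\bigl(f(x)+L(x)\bigr)
\]
has the same sign as $f(x)-L(x)$.

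\textbf{Step 2: $f-L$ and $V$ are nonzero with opposite signs.} On $(\ceil{q},n]$ we have $f(x)=(x-\ceil{q})h(x)$ and $L(x)=(x-\ceil{q})M(x)$, so $f(x)-L(x)=(x-\ceil{q})\bigl(h(x)-M(x)\bigr)$.

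In the proof of Lemma~\ref{sign_of_M}, the Lagrange remainder $\frac{h^{(d)}(\xi)}{d!}\prod_{j=\ceil{q}+1}^n(x-j)$ is strictly nonzero on the open interval. Indeed, $h^{(d)}$ has strict sign $(-1)^d$, and the product vanishes only at integers. Therefore $f(x)\neq L(x)$.

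Likewise, $V(x)=\frac{1}{\widehat{q}(n-\floor{q})!}\prod_{j=\ceil{q}}^n(j-x)$ vanishes only at the integers $\ceil{q},\ldots,n$, so $V(x)\neq 0$. Lemma~\ref{sign_of_M} then gives $\sgn(f(x)-L(x))=(-1)^{k-\ceil{q}-1}=-\sgn(V(x))$.

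\textbf{Step 3: conclude by cases.}
\begin{itemize}
\item If $k-\ceil{q}$ is odd, then $f(x)>L(x)$ and $V(x)<0$. So $f(x)^2-L(x)^2>0$ and $-V(x)>0$, which gives $f(x)^2>L(x)^2+V(x)$.
\item If $k-\ceil{q}$ is even, then $f(x)<L(x)$ and $V(x)>0$. So $L(x)^2-f(x)^2>0$, and adding $V(x)>0$ gives $L(x)^2+V(x)>f(x)^2$.
\end{itemize}

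In both cases the two terms that make up $L^2+V-f^2$ point the same way, so no cancellation can occur. Consequently the difference $L^2+V-f^2$ keeps a constant strict sign on each interval, and this sign alternates from one interval to the next.

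\textbf{Main obstacle.} The only delicate point is strictness. We must check that Lemma~\ref{sign_of_M} yields a strict inequality on the open intervals, and not just a weak one. This holds because the remainder factor $h^{(d)}(\xi)$ is never zero and the node product has no roots in $(k-1,k)$.
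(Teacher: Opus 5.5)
Your proof is correct and follows the paper's argument. Positivity of $f$ and $L$ on $(\ceil{q},n]$ (Lemma~\ref{L_on_first_segment}) reduces the sign of $f^2-L^2$ to that of $f-L$, and Lemma~\ref{sign_of_M} gives opposite strict signs for $f-L$ and $V$, so the two contributions to $L^2+V-f^2$ cannot cancel. Your explicit check that $f-L$ and $V$ are nonzero on the open intervals, via the strict sign of $h^{(d)}$ and the nonvanishing node product, supplies a strictness detail the paper leaves implicit.
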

    \begin{proof}
        Note that $f(x), L(x)>0$ for all $x\in(\ceil{q},n]$ by Lemma \ref{L_on_first_segment}. By Lemma \ref{sign_of_M}, $\sgn(f-L)=-\sgn(V)$. Thus, if $f(x)>L(x)>0$, then $V(x)<0$, so
        \[
        f(x)^2>L(x)^2>L(x)^2+V(x).
        \]
        The other case is similar.
    \end{proof}
    
    We are now ready to show the final lemma. 
    \begin{lemma} \label{roots_of_sigma0_interpolation}
        The only roots of the polynomial $\sigma_0$ in $[0,n]$ are $r\in[\floor{q},\ceil{q})$ and $\ceil{q},\ldots,n$, all of which have odd multiplicities.
    \end{lemma}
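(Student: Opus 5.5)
We will study the sign pattern of $\sigma_0(x)=(x-q)\bigl(f(x)^2-\sigma_1(x)\bigr)$ on $[0,n]$, using $\sigma_0(x)=x-\ceil{q}-(x-q)\sigma_1(x)$ and $f(x)^2=\frac{x-\ceil{q}}{x-q}$ for $x\notin[q,\ceil{q})$. The argument splits $[0,n]$ into $[0,\floor{q}]$, $(\floor{q},\ceil{q})$, and $[\ceil{q},n]$. The last piece is further split into the nodes $j=\ceil{q},\dots,n$ and the open gaps $(k-1,k)$.

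\textbf{Step 1: the upper region $[\ceil{q},n]$.} At each node $j$, $\sigma_1(j)=L(j)^2=f(j)^2$ and $V(j)=0$, so $\sigma_0(j)=0$. On each gap $(k-1,k)$ with $k=\ceil{q}+1,\dots,n$, Lemma~\ref{L_relates_to_f} shows that $f^2-\sigma_1$ has a strict constant sign. Since $x-q>0$ there, $\sigma_0$ has no roots in the open gaps. Lemma~\ref{sign_of_M} pins this sign down to $(-1)^{k-\ceil{q}-1}$ on $(k-1,k)$; for the first gap we also use $\sgn(-V)$ and check it directly. So the sign of $\sigma_0$ alternates across consecutive gaps, and it must change sign at every interior node $\ceil{q}+1,\dots,n-1$. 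Hence these roots have odd multiplicity.

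\textbf{Step 2: the endpoints $n$ and $\ceil{q}$.} At $x=n$ the sign cannot be compared from both sides. The remark after Lemma~\ref{SQF_remover} asserts that $n$ has odd multiplicity, and the proof of Theorem~\ref{upper_layers_ub} needs only sign information on $[0,n]$. I would argue it directly: $\sigma_0'(n)=1-\sigma_1(n)-(n-q)\sigma_1'(n)$, and one can show $\sigma_1'(n)\neq (1-\sigma_1(n))/(n-q)=\frac{d}{dx}f^2(n)$. This holds because $L^2+V$ crosses $f^2$ transversally: in the last gap, $L^2+V-f^2$ behaves like $\prod(x-j)$ times a nonzero factor, so the root at $n$ is simple. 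For $\ceil{q}$ the same two-sided comparison works. On the left, $\sigma_0<0$ just left of $\ceil{q}$, because $\sigma_1\ge0$ is bounded while $x-\ceil{q}<0$ and $x-q$ is small. On the right, Step 1 gives the sign on $(\ceil{q},\ceil{q}+1)$, and the two sides disagree.

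\textbf{Step 3: the lower region $[0,\ceil{q})$.} For $0\le x\le\floor{q}$ we have $x-q<0$ and $\sigma_1(x)\ge 1/\widehat q$. Hence $\sigma_0(x)\ge x-\ceil{q}+(q-x)/\widehat q$. This is affine in $x$, equals $0$ at $x=\floor{q}$, and has slope $1-1/\widehat q<0$, so it is positive for $x<\floor{q}$. At $\floor{q}$ itself, $\sigma_0\ge0$. Combined with $\sigma_0<0$ near $\ceil{q}^-$, the intermediate value theorem gives a root $r\in[\floor{q},\ceil{q})$. To show this root is unique and of odd multiplicity, I would use Lemma~\ref{L_on_first_segment}. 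On $(-\infty,\ceil{q})$, $L\le0$ is increasing, so $L^2$ is decreasing, and $V$ is decreasing on this range. Therefore $\sigma_1$ is strictly decreasing and positive there. On $(q,\ceil{q})$, $x-q>0$ and $x-\ceil{q}<0$, so $\sigma_0<0$ and there are no roots. On $[\floor{q},q]$, write $\sigma_0(x)=(q-x)\bigl(\sigma_1(x)-g(x)\bigr)$ with $g(x)=\frac{\ceil{q}-x}{q-x}$, which is increasing toward $+\infty$ as $x\to q$. Since $\sigma_1$ is strictly decreasing and $g$ is increasing, $\sigma_1-g$ is strictly decreasing. It therefore has exactly one simple sign change, which gives a unique root $r$ of odd multiplicity (if $r=\floor{q}$, the strict decrease still forces a sign change).

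\textbf{Main obstacle.} I expect the delicate points to be the endpoint node $n$ (and possibly $\ceil{q}$), where the alternation argument of Step 1 gives no sign change from both sides. The claim ``all odd multiplicities'' needs an extra local argument there. I would first try the transversality computation via the Lagrange remainder for $h-M$ near $x=n$. If that fails, I would fall back on noting that the proof of Theorem~\ref{upper_layers_ub} only needs the sign of $p$ on $[0,n]$, which Steps 1--3 already provide.
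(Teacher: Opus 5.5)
Your route is the paper's. On $[\ceil{q},n]$ you use the alternating sign of $f^2-\sigma_1$ across the gaps, via Lemmas~\ref{sign_of_M} and~\ref{L_relates_to_f}. You show $\sigma_0<0$ on $(q,\ceil{q})$, and use the intermediate value theorem plus monotonicity of $\sigma_1$ on $[\floor{q},q)$. Two of your variations improve on the paper. The affine bound $\sigma_0(x)\ge x-\ceil{q}+(q-x)/\widehat q$ handles $[0,\floor{q})$ using only $\sigma_1\ge 1/\widehat q$, and your two-sided comparison at $\ceil{q}$ makes explicit what the paper leaves implicit.

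The step that is not finished is the node $n$. There you only assert that $f^2-\sigma_1$ ``behaves like $\prod(x-j)$ times a nonzero factor'' and then hedge. Your fallback, that only the sign of $p$ on $[0,n]$ matters for Theorem~\ref{upper_layers_ub}, would not prove the lemma as stated, which claims odd multiplicity at $n$. Citing a remark for this would be circular, since odd multiplicity at $n$ is exactly what this lemma asserts.

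The missing ingredient is that the ``nonzero factor'' is bounded away from zero, and this follows from sign information you already use. For $\ceil{q}<n$, put $d=n-\ceil{q}$ and $W(x)=\prod_{j=\ceil{q}+1}^{n}(x-j)$. Then $f-L=(x-\ceil{q})(h-M)=(x-\ceil{q})\frac{h^{(d)}(\xi_x)}{d!}W(x)$ and $V(x)=\frac{(-1)^{d+1}}{\widehat q\,(d+1)!}(x-\ceil{q})W(x)$, so
\[
f^2-\sigma_1=(f-L)(f+L)-V=(x-\ceil{q})\,W(x)\Bigl[\tfrac{h^{(d)}(\xi_x)}{d!}\bigl(f(x)+L(x)\bigr)+\tfrac{(-1)^{d}}{\widehat q\,(d+1)!}\Bigr].
\]
Since $(-1)^d h^{(d)}>0$ on $(\ceil{q},\infty)$ and $f+L>0$ on $(\ceil{q},n]$ by Lemma~\ref{L_on_first_segment}, both terms in the bracket have sign $(-1)^d$. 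Hence the bracket has modulus at least $1/(\widehat q\,(n-\floor{q})!)$, and $|f^2-\sigma_1|\ge c\,|x-n|$ on a left neighbourhood of $n$. This excludes a zero of order at least $2$ of the analytic function $f^2-\sigma_1$ at $n$. As $x-q>0$ there, $n$ is a simple root of $\sigma_0$, and the same bound shows every node $\ceil{q}+1,\dots,n$ is simple.

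Alternatively, the Lagrange remainder stays valid for $x>n$, and $L>0$ there by continuity. So Lemmas~\ref{sign_of_M} and~\ref{L_relates_to_f} extend to a right neighbourhood of $n$ and give a genuine sign change at $n$. This is what the paper's one-line claim, that $\sigma_0$ changes sign at every node of $[\ceil{q},n]$, tacitly relies on. The degenerate case $\ceil{q}=n$ is explicit: $\sigma_0(x)=(x-n)(x-\floor{q})/\widehat q$.
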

    \begin{proof}
        By construction, $\ceil{q},\ldots,n$ are roots. Uniqueness of these roots in the interval $[\ceil{q},n]$ is an immediate consequence of Lemma \ref{L_relates_to_f}. To argue that the roots $x\in[\ceil{q},n]\cap \Z$ have odd multiplicities, just note that
        \[
        \sigma_0(x)=(x-q)(f(x)^2-\sigma_1(x))
        \]
        changes sign at each $x\in[\ceil{q},n]\cap \Z$ by the same lemma.
        Furthermore, there is a unique root~${r\in[\floor{q},q)}$, since
        \begin{align*}
        \frac{1}{\floor{q}-q}\sigma_0(\floor{q})&=\frac{1}{\widehat{q}}-(V(\floor{q})+L(\floor{q})^2)\\
        &=-L(\floor{q})^2\le 0
        \end{align*}
        and
        \[
        \lim_{x\rightarrow q^-}\frac{1}{x-q}\sigma_0(x)=\lim_{x\rightarrow q^-}(f(x)^2-\sigma_1(x))=+\infty,
        \]
        so we may apply the intermediate value theorem combined with $\sigma_1$ being decreasing, since both $L^2$ and $V$ are decreasing on $[\floor{q}, q)$. From this, we also deduce that $r$ is simple, since $f^2-\sigma_1$ is increasing.
        
        To see that there are no roots in $(-\infty,\floor{q})$, it suffices to observe that
        \begin{align*}
        \frac{1}{x-q}\sigma_0(x)=f(x)^2-\sigma_1(x)< f(\floor{q})^2-\sigma_1(\floor{q})\le 0.
        \end{align*}
        For the interval $(q,\ceil{q})$, just note that
        \[
        \frac{1}{x-q}\sigma_0(x)=\frac{x-\ceil{q}}{x-q}-\sigma_1(x)< 0.
        \]
    \end{proof}
    Note that it can likely be shown that the roots are simple, but this is computationally heavier and is not necessary for our purposes.
    
\subsection{Technical Lemmas for Section \ref{lower_layers_ub_section}} \label{technical_section_lower_layers}
In the proof of Theorem \ref{MK-rank}, we left open two Lemmas \ref{choosing constants} and \ref{roots_of_sigma0_Chebyshev}. One about the constraints on~$\sigma_1$ and the other about the roots of $\sigma_0$. We show them here. We will need two helper lemmas taken from \cite{kurpisz2021soscertificationsymmetricquadratic}.
\begin{lemma}[\cite{kurpisz2021soscertificationsymmetricquadratic} Lemma 23] \label{Chebyshev_first_root}
    Let $n,d\in\Z_{\ge 1}$ and let $r_0\in\R$ be the smallest root of $T_d(\frac{x}{n}-1)$. Then $$r_0\le\frac{\pi^2n}{8d^2}.$$
\end{lemma}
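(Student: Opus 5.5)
The idea is to use the explicit root formula for Chebyshev polynomials. The roots of $T_d(y)$ on $[-1,1]$ are $y_k=\cos\bigl(\frac{(2k-1)\pi}{2d}\bigr)$ for $k=1,\ldots,d$. The smallest one is the one closest to $-1$:
\[
y_{\min}=\cos\Bigl(\frac{(2d-1)\pi}{2d}\Bigr)=-\cos\Bigl(\frac{\pi}{2d}\Bigr).
\]
The map $x\mapsto \frac{x}{n}-1$ is an increasing affine bijection from $[0,2n]$ onto $[-1,1]$. So the smallest root of $T_d(\frac{x}{n}-1)$ is the preimage of $y_{\min}$:
\[
r_0=n\,(1+y_{\min})=n\Bigl(1-\cos\Bigl(\frac{\pi}{2d}\Bigr)\Bigr).
\]

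It remains to bound $1-\cos t$ for $t=\frac{\pi}{2d}$. We use the elementary inequality $1-\cos t\le \frac{t^2}{2}$, valid for all real $t$. One way to see it: $\frac{d}{dt}\bigl(\frac{t^2}{2}-1+\cos t\bigr)=t-\sin t\ge 0$ for $t\ge 0$, the expression vanishes at $t=0$, and it is even in $t$. Substituting gives
\[
r_0\le n\cdot\frac{1}{2}\cdot\frac{\pi^2}{4d^2}=\frac{\pi^2 n}{8d^2},
\]
which is the claim.

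No step is a real obstacle. The only care needed is identifying which root is smallest, since the index $k=d$ gives the cosine closest to $-1$, and checking that the affine change of variables is increasing, so the ordering of roots is preserved.
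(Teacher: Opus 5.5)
Your proof is correct: the explicit roots $\cos\bigl(\frac{(2k-1)\pi}{2d}\bigr)$ of $T_d$, pulled back through the increasing affine map $x\mapsto \frac{x}{n}-1$, give exactly $r_0=n\bigl(1-\cos\frac{\pi}{2d}\bigr)$, and the inequality $1-\cos t\le t^2/2$ then yields the bound. The paper does not prove this lemma itself but imports it from \cite{kurpisz2021soscertificationsymmetricquadratic}, and your computation is the natural direct verification of it.
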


\begin{lemma}[\cite{kurpisz2021soscertificationsymmetricquadratic} Lemma 7] \label{Chebyshev_lb}
    Let $n,d\in\Z_{\ge 1}$. Then for all $c\in[0,n]$
    \[
    T_d^2\left(-\frac{c}{n}-1\right)\ge\frac{1}{4}\left(1+\sqrt{\frac{2c}{n}}\right)^{2d}.
    \]
\end{lemma}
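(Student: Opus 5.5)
The plan is to use the closed form of $T_d$ outside $[-1,1]$ and reduce to an elementary inequality. Put $u := c/n \ge 0$, so the argument is $t = -(1+u) \le -1$. Since $T_d(-t) = (-1)^d T_d(t)$, we get $T_d^2(-1-u) = T_d^2(1+u)$. It therefore suffices to show
\[
T_d(1+u) \ge \tfrac12\bigl(1+\sqrt{2u}\bigr)^d \qquad\text{for all } u \ge 0;
\]
squaring this then gives the claim.

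The first step is to recall that for $t \ge 1$,
\[
T_d(t) = \tfrac12\Bigl[\bigl(t+\sqrt{t^2-1}\bigr)^d + \bigl(t-\sqrt{t^2-1}\bigr)^d\Bigr].
\]
This follows from $T_d(\cosh\theta) = \cosh(d\theta)$, or equivalently from the recurrence $T_{d+1} = 2tT_d - T_{d-1}$, since both sides satisfy it with the same initial values. For $t \ge 1$ we have $t - \sqrt{t^2-1} = 1/(t+\sqrt{t^2-1}) > 0$. So the second summand is nonnegative, and $T_d(t) \ge \tfrac12(t+\sqrt{t^2-1})^d \ge 0$.

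The second step substitutes $t = 1+u$, which gives $\sqrt{t^2-1} = \sqrt{u^2+2u} \ge \sqrt{2u}$ because $u \ge 0$. Hence $t + \sqrt{t^2-1} \ge 1 + \sqrt{2u} \ge 1$. Raising to the $d$-th power preserves the inequality, since both sides are nonnegative. Combining with the first step yields $T_d(1+u) \ge \tfrac12(1+\sqrt{2u})^d \ge 0$, and squaring both nonnegative sides gives $T_d^2(-c/n-1) \ge \tfrac14(1+\sqrt{2c/n})^{2d}$.

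I do not expect a genuine obstacle here. The only point needing care is the sign bookkeeping: the parity of $T_d$ moves the argument from $t \le -1$ to $t \ge 1$, and the positivity of $t-\sqrt{t^2-1}$ lets us drop that term. The restriction $c \le n$ is not needed; only $c \ge 0$ is used.
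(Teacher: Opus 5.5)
Your proof is correct and complete: the parity reduction to $T_d^2(1+u)$, the closed form $T_d(t)=\frac12\bigl[(t+\sqrt{t^2-1})^d+(t-\sqrt{t^2-1})^d\bigr]$ with the nonnegative second term dropped, and the estimate $1+u+\sqrt{u^2+2u}\ge 1+\sqrt{2u}$ together give exactly the claimed bound, and, as you note, it holds for all $c\ge 0$. The paper does not prove this lemma itself; it only imports it as Lemma~7 of Kurpisz--Potechin--Wirth, and your argument from the explicit formula for $T_d$ outside $[-1,1]$ is the standard one and fully suffices.
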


    \begin{lemma} \label{choosing constants}
    The polynomial $\sigma_1$
    satisfies the constraints \emph{(1), (2),} and \emph{(3)}.
    \end{lemma}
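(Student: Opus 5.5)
The plan is to check the three constraints directly from the definitions $\tau(x)=\frac18 T_d(y(x))^m$ and $\sigma_1=\tau^2$. Throughout write $N:=n-\floor{q}$ and
\[
y(x):=\frac{x-\ceil{q}+r_0}{N}-1 .
\]
The whole argument rests on tracking where $y(x)$ lands relative to $[-1,1]$. There are two anchor facts.

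First, by definition of $r_0$ we have $T_d(y(\ceil{q}))=T_d(r_0/N-1)=0$, so $\sigma_1(\ceil{q})=0$.

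Second, Lemma~\ref{Chebyshev_first_root} (applied with $N$ in place of $n$) gives $r_0\le \pi^2N/(8d^2)$. Since $d\ge 3\sqrt N$, this yields $r_0\le \pi^2/72<0.14$.

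\emph{Constraint (3), and a preliminary bound.} For $x\in[\ceil{q},n]$ the argument $y(x)$ runs from $r_0/N-1$ up to $(N-1+r_0)/N-1<0$. Hence $y(x)\in[-1,1]$ and $|T_d(y)|\le 1$. Because $m\ge1$ and $|T_d(y)|\le 1$, we get $\sigma_1\le \frac1{64}T_d(y)^2\le \frac1{64}$. This already gives (3). It also gives (2) whenever $x-\ceil{q}\ge 1/32$.

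\emph{Constraint (2).} Put $t:=x-\ceil{q}\in[0,1]$. Since $y(\ceil{q})$ is a root of $T_d$, the Markov inequality $|T_d'|\le d^2$ on $[-1,1]$ gives $|T_d(y(x))|\le d^2t/N$. Therefore
\[
\sigma_1(x)\le \frac{d^4t^2}{64N^2}.
\]
With $d\le 3\sqrt N+1\le 4\sqrt N$ this is at most $4t^2$. For $t\le 1/8$ we then have $4t^2\le t/2$. The remaining range $t\ge 1/32$ is covered by the bound $\sigma_1\le 1/64$ from the previous step, so the two regimes overlap and (2) holds. If the constants turn out tighter than expected, I would replace $4\sqrt N$ by the sharper $d\le 3\sqrt N+1$ and handle small $N$ separately.

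\emph{Constraint (1), the main computation.} For $0\le x\le\floor{q}$ write $y(x)=-c/N-1$ with
\[
c=1-r_0+\floor{q}-x .
\]
Then $c\ge 1-r_0\ge 0.86$. Also $c\le \floor{q}+1\le N$, which uses $q<\floor{n/2}$. Lemma~\ref{Chebyshev_lb} applies and gives $T_d(y)^2\ge \frac14(1+a)^{2d}$ with $a=\sqrt{2c/N}\ge\sqrt{1.7/N}$. Hence
\[
\sigma_1(x)=\frac1{64}\bigl(T_d(y)^2\bigr)^m\ge \frac1{64}\Bigl(\tfrac14(1+a)^{2d}\Bigr)^m .
\]
The main obstacle is to show $(1+a)^{2d}\ge 16$, so that the bracket is at least $4$. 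Bernoulli alone gives only $1+2da\approx 8.8$. I would instead use $(1+a)^{2d}\ge \exp\!\bigl(2da/(1+a)\bigr)$. Here $2da\ge 6\sqrt{1.7}\approx 7.8$, and $a\le\sqrt 2$ is bounded, so it remains to control $a$. Large $a$ only happens for small $N$. In that case $d$ is large relative to $N$, and I would instead use the binomial term $\binom{2d}{2}a^2$, or simply check a few small cases by hand.

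Once the bracket is at least $4$, we get $\sigma_1\ge 4^m/64$. The choice $m\ge\frac12\log_2(64/\widehat q)$ gives $4^m\ge 64/\widehat q$, hence $\sigma_1(x)\ge 1/\widehat q$. A little extra slack, for instance from $c\ge 0.86$ or from the exact value of $d$, upgrades this to the strict inequality in (1).
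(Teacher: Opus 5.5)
Your argument is correct and is essentially the paper's. You use Lemma~\ref{Chebyshev_first_root} to get $r_0\le\pi^2/72$, Lemma~\ref{Chebyshev_lb} for (1), $|T_d|\le 1$ on $[-1,1]$ for (3), and Markov's inequality with the mean value theorem for (2). In (2), your split into $t\le 1/8$ and $t\ge 1/32$ is in fact needed: the paper's shortcut $\frac{d^{4m}}{64N^{2m}}\le\frac12$ is false, since $d^2/N\ge 9$.

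The step you left open in (1) closes at once and uniformly in $N$. Bernoulli applied to $(1+a)^d$ gives $(1+a)^{2d}\ge(1+da)^2\ge(1+3\sqrt{1.7})^2>24$; your own bound also works directly, since $2da/(1+a)\ge 6\sqrt{1.7}/(1+\sqrt2)>3.2>\ln 16$. This also yields the strict inequality, so no small-$N$ case check is required.
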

    \begin{proof}
        Let $N=n-\floor{q}\ge 1$. We show that the three constraints are satisfied separately.
        \begin{enumerate}[label=(\arabic*)]
            \item By standard results on Chebyshev polynomials, we know that $|T_d|$ is decreasing on $(-\infty, x_0]$, where $x_0$ denotes the smallest root of $|T_d|$. It follows that the function $\sigma_1$ is strictly decreasing on $(-\infty,\ceil q]$.        
        It is enough to check that $\sigma_1(\floor{q})=\tau(\floor{q})^2>\frac{1}{\widehat{q}}$. For any $x\in[0,\floor{q}]$, we have:
            \begin{align*}            \sigma_1(x)\ge&\tau(\floor{q})^2\ge\frac{1}{64}\left( \frac{1}{4}\left(1+\sqrt{\frac{2(1-r_0)}{N}}\right)^{d}\right)^{2m}\\
            &\ge \frac{1}{64}\left( \frac{1}{4}\left(1+\frac{1}{\sqrt{N}}\right)^{d}\right)^{2m}\ge \frac{1}{64} \left(\frac{1}{4}2^{d/\sqrt{N}}\right)^{2m}> \frac{1}{\widehat{q}},
            \end{align*}
            where the second inequality follows from Lemma \ref{Chebyshev_lb} applied to $c=1-r_0\in[0,N]$, where ${r_0\le\frac{\pi^2N}{8d^2}\le \frac{1}{2}}$ by Lemma \ref{Chebyshev_first_root} which implies the third inequality. The fourth inequality follows from $(1+\frac{1}{\sqrt{N}})^{\sqrt{N}} \ge 2$ and the last inequality from the definition of $d$ and $m$.
            \item We know that $|T_d(x)|\le 1$ for all $x\in[-1,1]$. 
            Thus, by the Markov brothers' inequality, $|T_d'(x)|\le d^2$ for all $x\in[-1,1]$. 
            In particular, $|T_d'(\frac{x-\ceil{q}+r_0}{N}-1)|\le \frac{d^2}{N}$ for all $x\in[\ceil{q},n]$. Applying the mean value theorem yields
            \begin{align*}               \left|T_d\left(\frac{x-\ceil{q}+r_0}{N}-1\right)\right|&=\left|T_d\left(\frac{x-\ceil{q}+r_0}{N}-1\right)-T_d\left(\frac{r_0}{N}-1\right)\right|\\
                &\le (x-\ceil{q})\sup_{x\in[k,n]}\left|T_d'\left(\frac{x-\ceil{q}+r_0}{N}-1\right)\right|\\
                &\le \frac{d^2}{N}(x-\ceil{q}).
            \end{align*}
            Combining this with $|T_d(\frac{x-\ceil{q}+r_0}{N}-1)|\le 1$ for any $x\in[\ceil{q},n]$ gives
            \begin{align*}
            \sigma_1(x)&=\frac{1}{64} T_d\left(\frac{x-\ceil{q}+r_0}{N}-1\right)^{2m}\\
            &\le \min\left\{\frac{d^{4m}}{64N^{2m}}(x-\ceil{q})^{2m}, \frac{1}{64}\right\}.
            \end{align*}
            For $x\in[\ceil{q},\ceil{q}+1]$, this shows
            \begin{align*}
                \sigma_1(x)&\le \min\left\{\frac{d^{4m}}{64N^{2m}}(x-\ceil{q})^{2m}, \frac{1}{64}\right\}
                \le \frac{x-\ceil{q}}{2},
            \end{align*}
            since $x-\ceil{q}\in[0,1]$ and $\frac{d^{4m}}{64N^{2m}}\le\frac{1}{2}$.
            \item Using the inequality derived in the previous point, we conclude
            \begin{align*}
                \sigma_1(x)\le \min\left\{\frac{d^{4m}}{64N^{2m}}(x-\ceil{q})^{2m}, \frac{1}{64}\right\}\le \frac{1}{64}\le \frac{1}{2}
            \end{align*}
            for any $x\in[\ceil{q}+1,n]$. \qedhere
        \end{enumerate}
    \end{proof}

    \begin{lemma} \label{roots_of_sigma0_Chebyshev}
        The polynomial $\sigma_0$ has exactly two roots in $(\floor{q}, \ceil{q}]$, both of which are simple.
    \end{lemma}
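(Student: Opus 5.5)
The plan is to work with the rational function $g(x):=\frac{\sigma_0(x)}{x-q}=\frac{x-\ceil{q}}{x-q}-\sigma_1(x)$ on the two pieces $(\floor{q},q)$ and $(q,\ceil{q}]$ separately. Since $x-q$ has no zeros there, the roots of $\sigma_0$ in $(\floor{q},\ceil{q}]$ are exactly the zeros of $g$ on these pieces, with the same multiplicities. The two roots will be $\ceil{q}$ and a single $r\in(\floor{q},q)$.

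\textbf{The piece $(q,\ceil{q}]$.} Here $\frac{x-\ceil{q}}{x-q}\le 0$, with equality only at $x=\ceil{q}$, and $\sigma_1=\tau^2\ge 0$. So $g(x)\le 0$, and equality forces both $x=\ceil{q}$ and $\tau(x)=0$. By construction $\tau(\ceil{q})=\frac18 T_d(r_0/N-1)^m=0$, so $\ceil{q}$ is the only root in this piece.

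To see that the root $\ceil{q}$ is simple, compute $\sigma_0'(\ceil{q})$. We have $\sigma_0(x)=x-\ceil{q}-(x-q)\sigma_1(x)$, and $\sigma_1$ vanishes at $\ceil{q}$ to order $2m\ge 2$, so $\sigma_1'(\ceil{q})=0$. Hence $\sigma_0'(\ceil{q})=1-\widehat{q}'\cdot 0=1\neq 0$, which gives simplicity.

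\textbf{The piece $(\floor{q},q)$.} Here I use monotonicity.
\begin{itemize}
\item The map $x\mapsto \frac{x-\ceil{q}}{x-q}=1+\frac{q-\ceil{q}}{x-q}$ is strictly increasing on $(-\infty,q)$, since $q-\ceil{q}<0$.
\item By the argument in Lemma \ref{choosing constants}(1), $\sigma_1$ is strictly decreasing on $(-\infty,\ceil{q}]$: $|T_d|$ is decreasing left of its smallest root, the argument is affine increasing, and the power $2m$ preserves this.
\item Therefore $g$ is strictly increasing on $(\floor{q},q)$.
\end{itemize}
At the left endpoint, $g(\floor{q})=\frac{1}{\widehat{q}}-\sigma_1(\floor{q})<0$ by constraint (1). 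As $x\to q^-$, we have $g(x)\to+\infty$, while $\sigma_1$ stays bounded. The intermediate value theorem then gives exactly one zero $r\in(\floor{q},q)$.

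For simplicity of $r$, write $\sigma_0=(x-q)g$, so $\sigma_0'(r)=(r-q)g'(r)$. This is nonzero provided $g'(r)>0$, i.e.\ strict monotonicity in the derivative sense. That holds because the rational part has derivative $\frac{\ceil{q}-q}{(x-q)^2}>0$ and $-\sigma_1'\ge 0$.

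\textbf{Main obstacle.} The most delicate step is the strict monotonicity of $\sigma_1$ on $(\floor{q},\ceil{q}]$, specifically checking that the shifted argument $\frac{x-\ceil{q}+r_0}{N}-1$ stays at or below the smallest root $r_0/N-1$ of $T_d$ on that interval. This holds because the argument is increasing in $x$ and equals $r_0/N-1$ exactly at $x=\ceil{q}$. Only the weak inequality $-\sigma_1'\ge 0$ is needed for the derivative argument, so this step should close.

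Finally, I would note explicitly that $\sigma_0(\floor{q})\ne 0$. This follows from $g(\floor{q})<0$, so $\floor{q}$ is not a root, which is consistent with the half-open interval in the statement.
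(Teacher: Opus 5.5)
Your proof is correct and takes the same route as the paper. Like the paper, you analyze the sign of $\sigma_0(x)/(x-q)$ on either side of $q$, use the intermediate value theorem with monotonicity to get the unique root $r\in(\floor{q},q)$, and compute $\sigma_0'(\ceil{q})=1$. Your argument for simplicity of $r$ via $g'(r)>0$ is slightly more careful than the paper's appeal to monotonicity alone, which by itself does not rule out a multiple root.

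The only blemish is the stray factor $\widehat{q}'$ in your derivative at $\ceil{q}$. It should be $\ceil{q}-q$, but since it multiplies $\sigma_1'(\ceil{q})=0$, nothing changes.
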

    \begin{proof}
        Firstly, $\sigma_1$ is strictly decreasing on $(q,\ceil q)$ and $\sigma_1(\ceil q)=0$, hence $\sigma_1(x)>0$ for any $x\in(q,\ceil q)$. Thus, 
        \[
        \frac{1}{x-q}\sigma_0(x)=\frac{x-\ceil q}{x-q}-\sigma_1(x)<0
        \]
        on $(q,\ceil q)$, i.e., $\sigma_0$ has no roots in this interval.
        To conclude that there is a root in $(\floor q,q)$, it suffices to observe the following:
        \[
        \frac{1}{\floor q-q}\sigma_0(\floor q)=\frac{1}{\widehat q}-\sigma_1(\floor q)<0
        \]
        and
        \[
        \lim_{x\rightarrow q^-}\frac{1}{x-q}\sigma_0(x)=\lim_{x\rightarrow q^-}\left(\frac{x-\ceil q}{x-q}-\sigma_1(x)\right)=+\infty,
        \]
        and apply the intermediate value theorem. For uniqueness and simplicity, note that $\frac{1}{x-q}\sigma_0(x)$ is increasing on $(\floor q, q)$.
        
        To show that $\ceil{q}$ is simple, note first that the derivative of $\sigma_1$ is given by $\sigma_1'(x)=2\tau'(x)\tau(x)$.
        In particular, $\ceil q$ has multiplicity $1$ since it is a root of both $\sigma_1$ and $\tau$, and
        \[
        \sigma_0'(\ceil q)=1-2(\ceil q-q)\tau'(\ceil q)\tau(\ceil q)-\sigma_1(\ceil q)=1\ne 0.  \qedhere
        \]
    \end{proof}

\bibliographystyle{alpha}
\bibliography{refs}

\end{document}